\newtheorem{Theorem}{Theorem}[section]
\newtheorem{Lemma}[Theorem]{Lemma}
\newtheorem{Proposition}[Theorem]{Proposition}
\newtheorem{Remark}[Theorem]{Remark}
\begin{document}
\begin{center}
{\large\bf Small rational curves on the moduli space of stable bundles\ }

\vspace{0.2cm}

Liu Min \\
School of Mathematical Sciences,\\
 Fudan University, Shanghai 200433, P.R.China\\
E-mail: liumin@amss.ac.cn\\
\end{center}
\footnotetext{ Supported in part by the National Natural Science Foundation of China (Grant No. 10871106) }

\begin{abstract}
For a smooth projective curve $C$ with genus $g\geq 2$ and a degree 1 line bundle
$\mathcal{L}$ on $C$, let $M:=SU_{C}(r,\mathcal{L})$ be the moduli space of stable vector bundles of rank $r$ over $C$ with the fixed determinant $\mathcal{L}$. In this paper, we study the small rational curves on $M$ and estimate the codimension of the locus of the small rational curves. In particular, we determine all small rational curves when $r=3$.
\end{abstract}

\textbf{Keywords:} Small rational curves; Moduli space of vector bundles over a curve.

\textbf{Mathematics Subject Classification 2000:} 14D20, 14H60

\section{ Introduction }	
Let $C$ be a smooth projective curve of genus $g\geq 2$ and
$\mathcal{L}$ a line bundle on $C$ of degree $d$. Let
$M:=SU_{C}(r,\mathcal{L})$ be the moduli space of stable vector
bundles of rank $r$ and with the fixed determinant $\mathcal{L}$.
Throughout this paper, we assume that $d=1$. In this case, $M$ is a
smooth projective Fano variety with $Pic(M)=\mathbb{Z}\cdot\Theta$
and $-K_{M}=2\Theta$, where $\Theta$ is an ample divisor (\cite{Ramanan}, \cite{DrezetNarasimhan}). So, as a Fano
variety, the \emph{index} of $M$ is $2$.

For any smooth Fano variety $X$, a rational curve $l\subset X$ is called
a \emph{line on} $X$ if the index of $X$ equals $-K_{X}\cdot l$.
Also we say that $l$ \emph{has degree} $k$ if $-K_{X}\cdot l$ equals
$k$ times the index of $X$. For the moduli space $M$, so defined degree of
a rational curve $l\subset M$ equals $\text{deg}(\Theta|_{l})$. So,
for any rational curve $\phi: \mathbb{P}^{1}\longrightarrow M$, we
can define its degree $\text{deg}(\phi^{*}\Theta)$ with respect to the
ample generator of $\text{Pic}(M)$. It's an open problem if every
smooth Fano variety with picard number 1 has a line ( see Problem
V1.13 of \cite{Kollar} ).

Ramanan \cite{Ramanan} found a family of lines on $M$, but these lines lie
in a proper closed subset. It has been observed that $M$ is covered
by rational curves of degree $r$, which are called \emph{Hecke
curves} (Corollary 5.16 of \cite{NarasimhanRamanan78}). Sun proved
that there are no lines on $M$ except those found by Ramanan
(Theorem 2 of \cite{Sun05}), and he also proved that rational curves of
minimal degree passing through a general point of $M$ are Hecke
curves (Theorem 1 of \cite{Sun05}), which answers a question of Jun-Muk
Hwang (Question 1 of \cite{Hwang01}) and also implies that all the
rational curves of $\Theta-$degree smaller than $r$, called \emph{small rational curves} (\cite{MokSun09}), must lie in a
proper closed subset. It has been proved that the locus of small
rational curves consists of $r-1$ irreducible
components (\cite{Choe11}).

The aim of this paper is to estimate the codimension of the locus of small rational curves and to determine the small rational curves when $r=3$. In particular, we can see that each small rational curve is defined by a vector bundle on $C\times \mathbb{P}^{1}$ of certain fixed extension types. We also construct all small rational curves on $M$ when $r=3$, which will be useful to study the Chow-group of the moduli space $M$.

This paper is organized as follows. In section 2, using the degree formula of rational curves due to Sun (\cite{Sun05}), we show that, for any small rational curve, there exist a sequence fixed semi-stable bundles $V_{1}, \cdots, V_{n-1}$ of degree 0 and a fixed stable bundle $V_{n}$ of degree 1 on $C$ such that the bundles corresponding to points of the small rational curve are obtained by extensions of $V_{1}, \cdots, V_{n-1}$ and $V_{n}$. And we also prove that the locus of small rational curves is a closed subvariety of codimension at least $\text{min}_{0<r_{1}<\cdots< r_{n}=r}\{\sum_{i=2}^{n}r_{i-1}(r_{i}-r_{i-1})(g-2)+r_{n-1}(r_{n}-r_{n-1}-1)\}$, where $0<r_{1}<\cdots< r_{n}=r$ runs over all positive integers $r_{i}$ satisfying $\sum_{i=1}^{n-1}r_{i}(\alpha_{i}-\alpha_{i+1})<r$ for some $n$ and some integers $\alpha_{1}>\cdots>\alpha_{n}$.

In section 3, we determine all small rational curves on $M=SU_{C}(3, \mathcal{L})$. Let $J_{C}$ be the Jacobian of $C$, and $U_{C}(2,1)$ the moduli space of stable bundles on $C$ of rank 2 and degree 1. Let $\mathcal{R}\subset J_{C}\times U_{C}(2,1)$ be the closed subvariety consisting of $([\xi], [V_{2}])$ satisfying $\xi\otimes\text{det}(V_{2})\cong\mathcal{L}$. We construct a projective bundle $q: \mathcal{P}\rightarrow \mathcal{R}$ such that, for any $([\xi], [V_{2}])\in \mathcal{R}$, $q^{-1}(([\xi], [V_{2}]))\cong\mathbb{P}\text{Ext}^{1}(V_{2}, \xi)$. Let $SU_{C}(2,0)$ be the open subset of stable bundles in $U_{C}(2,0)$, and $J_{C}^{1}$ the moduli space of degree 1 line bundles on $C$. Let $\mathcal{R}'\subset SU_{C}(2,0)\times J_{C}^{1}$ be the closed subvariety consisting of $([V_{1}], [\xi'])$ satisfying $\text{det}(V_{1})\otimes \xi'\cong \mathcal{L}$.
By using Hecke transformation, we construct a scheme $\mathbb{P}(\mathcal{V^{\vee}})^{s}$ parametrizing a family of stable bundles of rank 2 and degree 0, which maps onto $SU_{C}(2,0)$. Let $T=(\mathbb{P}(\mathcal{V^{\vee}})^{s}\times J_{C}^{1})\times_{SU_{C}(2,0)\times J_{C}^{1}} \mathcal{R}'$. Then there is a surjective morphism $\theta': T\rightarrow \mathcal{R}'$. We also construct a projective bundle $q': \mathcal{P}'\rightarrow T$ such that, for any $([V_{1}], [\xi'])\in \mathcal{R}'$ and any $t\in \theta'^{-1}([V_{1}], [\xi'])$, $q'^{-1}(t)\cong \mathbb{P}\text{Ext}^{1}(\xi', V_{1})$. We have:
\vspace{0.1cm}

 \begin{Theorem}
 There are morphisms $\Phi: \mathcal{P}\rightarrow M$ and $\Psi: \mathcal{P}'\rightarrow M$ such that any rational curve obtained by the following ways are small rational curves and any small rational curve $\phi: \mathbb{P}^{1}\longrightarrow M$ can be obtained by one of the following ways:

(i) it's the image (under $\Phi$) of a rational curve of degree 2 in $\mathcal{P}$ in the fiber of $q$;

(ii) it's the image (under $\Phi$) of a double cover of a line in $\mathcal{P}$ in the fiber of $q$;

(iii) it's the image (under $\Phi$) of a line in $\mathcal{P}$ which is not in a fiber of $q$ and maps to a line  in $U_{C}(2,\mathcal{L'})$ for some line bundle $\mathcal{L'}$ of degree 1;

(iv) it's the image (under $\Psi$) of a line in $\mathcal{P}'$ in the fiber of $q'$.
\end{Theorem}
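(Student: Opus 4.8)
The plan is to prove both implications by combining the explicit universal extensions underlying the bundles $q:\mathcal{P}\to\mathcal{R}$ and $q':\mathcal{P}'\to T$ with the structure theorem and degree formula of Section 2. First I would pin down the two morphisms. A point of $\mathcal{P}$ over $([\xi],[V_2])$ is a nonzero class in $\mathbb{P}\mathrm{Ext}^1(V_2,\xi)$, hence an extension $0\to\xi\to E\to V_2\to 0$ with $\det E=\xi\otimes\det V_2\cong\mathcal{L}$ of rank $3$ and degree $1$; since $V_2$ is stable of slope $1/2$ and $\xi$ has slope $0$, a short slope computation shows that every nonsplit such $E$ is stable, so sending the class to $[E]$ defines $\Phi:\mathcal{P}\to M$ (and similarly $\Psi:\mathcal{P}'\to M$ from $0\to V_1\to E\to\xi'\to 0$, using stability of $V_1\in SU_C(2,0)$). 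I would record that over each fibre of $q$ the tautological extension is a bundle on $C\times\mathbb{P}\mathrm{Ext}^1(V_2,\xi)$ of the form $0\to p_C^*\xi\otimes p^*\mathcal{O}(1)\to\mathcal{E}\to p_C^*V_2\to 0$ (after a suitable twist), and analogously for $q'$; restricting to a curve in a fibre then realizes the family with the filtration whose graded pieces are the fixed $V_i$ twisted by line bundles on $\mathbb{P}^1$, exactly the normal form of Section 2.

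With the morphisms in hand, the forward implication is a degree count. By Section 2 the $\Theta$-degree of such a family equals $\sum_{i=1}^{n-1} r_i(\alpha_i-\alpha_{i+1})$, where $\alpha_i-\alpha_{i+1}$ is the relative twist of the $i$-th graded piece and agrees with the degree of the induced map $\mathbb{P}^1\to\mathbb{P}\mathrm{Ext}^1$. For construction (iv) the fibre of $q'$ has type $(r_1,r_2)=(2,3)$ with a line giving $\alpha_1-\alpha_2=1$, so the image has degree $2<3$. For (i) and (ii) the fibre of $q$ has type $(1,3)$ and the curve maps to $\mathbb{P}\mathrm{Ext}^1(V_2,\xi)$ with degree $2$ --- either birationally onto a conic or as a double cover of a line --- so $\alpha_1-\alpha_2=2$ and the image again has degree $2$. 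For (iii), a line in $\mathcal{P}$ transverse to $q$ with fixed $\xi=\mathcal{L}\otimes\det(V_2)^{-1}$ and $[V_2]$ tracing a line in $U_C(2,\mathcal{L}')$ falls into the type-$(1,3)$ stratum with $\alpha_1-\alpha_2=1$, giving degree $1$. In all cases the degree is $<r=3$, so the images are small rational curves.

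For the converse I would start from the output of Section 2: given any small rational curve $\phi:\mathbb{P}^1\to M$, its defining bundle on $C\times\mathbb{P}^1$ carries a filtration with fixed graded bundles $V_1,\dots,V_{n-1}$ (semistable, degree $0$) and $V_n$ (stable, degree $1$), subject to $\sum r_i(\alpha_i-\alpha_{i+1})<3$ with $\alpha_1>\dots>\alpha_n$. The inequality, together with $\alpha_i-\alpha_{i+1}\ge1$, forces $n=2$ and leaves only the two rank patterns $(r_1,r_2)=(1,3)$ (with $\alpha_1-\alpha_2\in\{1,2\}$) and $(2,3)$ (with $\alpha_1-\alpha_2=1$); the case $n=3$ is excluded because $1\cdot1+2\cdot1=3\not<3$. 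In the pattern $(2,3)$ the curve is an extension $0\to V_1\to E_t\to\xi'\to 0$ with $V_1\in SU_C(2,0)$ and $\xi'\in J_C^1$ fixed and a line's worth of extension classes, which is precisely a line in a fibre of $q'$, i.e.\ construction (iv); here I would use the Hecke family $\mathbb{P}(\mathcal{V}^\vee)^s$ to produce $V_1$ in families, since $SU_C(2,0)$ carries no universal bundle. In the pattern $(1,3)$ with $\alpha_1-\alpha_2=2$ the extension classes give a degree-$2$ map to $\mathbb{P}\mathrm{Ext}^1(V_2,\xi)$, which is either a conic or a double cover of a line, yielding (i) or (ii); and the remaining case $\alpha_1-\alpha_2=1$ gives the degree-$1$ curves, which I would identify with construction (iii) by exhibiting, along such a curve, the varying rank-$2$ quotient and the induced line in $U_C(2,\mathcal{L}')$.

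The main obstacle is this last matching in the converse, rather than the degree bookkeeping. Translating the abstract normal form of Section 2 into one of the four concrete geometric families requires controlling how the extension data varies in moduli: I must show that the varying part of a type-$(1,3)$, degree-$1$ family is governed by a genuine line in $U_C(2,\mathcal{L}')$ and that the resulting curve in $M$ coincides with the image of an honest line in $\mathcal{P}$ transverse to $q$, which amounts to a non-uniqueness statement for the extension presentation of a fixed bundle $E_t$. A second delicate point, needed for constructions (ii) and (iv), is the verification that the relevant families genuinely land in the stable locus and that $\Phi,\Psi$ are morphisms there; and for (iv) the construction and stable-locus analysis of $\mathbb{P}(\mathcal{V}^\vee)^s$ and of the fibre product $T$ must be carried out so that $\theta':T\to\mathcal{R}'$ is surjective with the asserted fibres. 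Once these geometric identifications are established, the enumeration above shows that every small rational curve is of type (i)--(iv) and that each type is small, completing the proof.
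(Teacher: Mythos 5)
Your overall architecture is the same as the paper's: construct $\Phi$ and $\Psi$ from the universal extensions, prove the forward direction by a degree count, and prove the converse by reducing, via Lemma \ref{lm:claims} and the degree formula (\ref{eq:1}), to the two extension types (A) ($r_{1}=1$, $\alpha_{1}-\alpha_{2}=2$) and (B) ($r_{1}=2$, $\alpha_{1}-\alpha_{2}=1$). Your treatment of (i), (ii) and (iv) matches the paper's (the paper computes degrees via $\deg\phi^{*}\Theta=3c_{2}(E)-c_{1}(E)^{2}$ rather than the splitting-type formula, but that is equivalent). However, your handling of case (iii) contains a genuine error, and it breaks the converse. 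A line in $\mathcal{P}$ transverse to $q$ whose projection $p_{2}\circ q$ traces a line in $U_{C}(2,\mathcal{L}')$ is \emph{not} of type $(1,3)$ with $\alpha_{1}-\alpha_{2}=1$, and its image is not a degree-$1$ curve: the rank-$2$ quotient of the restricted universal extension is a \emph{varying} family $E'$ (not of the form $f^{*}V_{2}$), and since $E'$ itself sits in an exact sequence $0\to f^{*}\mathcal{L}_{2}\otimes\pi^{*}\mathcal{O}_{\mathbb{P}^{1}}(1)\to E'\to f^{*}\mathcal{L}_{3}\to 0$, the bundle $E$ has generic splitting type $(1,1,0)$, i.e. it is of type (B), and its $\Theta$-degree is $2$ (this is Proposition \ref{prop:3.5}, where $c_{2}(E)=2$ and $c_{1}(E)^{2}=4$). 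Moreover, in Section 3 ``small rational curve'' means $\Theta$-degree exactly $2$: the degree-$1$ curves are lines, they are images of lines \emph{inside} fibers of $q$ (Theorem \ref{th:3.9}), and they are none of (i)--(iv); so identifying the degree-$1$ stratum with construction (iii), as you do, cannot be correct.

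The second half of the gap is in your converse for type (B). You write the curve as an extension with ``$V_{1}\in SU_{C}(2,0)$'' and conclude it is a line in a fiber of $q'$, i.e. case (iv). But Lemma \ref{lm:claims} only gives that $V_{1}$ is \emph{semistable} of degree $0$, while $T$ and $\mathcal{P}'$ lie only over the stable locus $SU_{C}(2,0)$ (the Hecke family $K(\mathcal{V})|_{\mathbb{P}(\mathcal{V}^{\vee})^{s}}$ parametrizes stable bundles only); a strictly semistable $V_{1}$ gives no point of $T$, hence no fiber of $q'$ through which $\phi$ could factor. This strictly semistable case is exactly what construction (iii) is for: if $\mathcal{L}_{1}\subset V_{1}$ is a degree-$0$ line subbundle, then $f^{*}\mathcal{L}_{1}\otimes\pi^{*}\mathcal{O}_{\mathbb{P}^{1}}(1)\subset E$ exhibits $\phi$ as the image of a line in $\mathcal{P}$ over $[\mathcal{L}_{1}]\in J_{C}$ not contained in a fiber of $q$, whose projection to $U_{C}(2,\mathcal{L}\otimes\mathcal{L}_{1}^{-1})$ is a line --- this is precisely the dichotomy (stable $V_{1}\Rightarrow$ (iv), strictly semistable $V_{1}\Rightarrow$ (iii)) on which the paper's proof of Theorem \ref{th:3.8} turns. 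As written, your enumeration silently omits all small rational curves of type (B) with $V_{1}$ strictly semistable, so the converse is incomplete.
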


\section{The Locus of Small Rational Curves on Moduli Space}
In \cite{Sun05}, it was shown that any rational curve $\phi: \mathbb{P}^{1}\longrightarrow M$ is defined by a vector bundle $E$ on $C\times \mathbb{P}^{1}$ and a degree formula was discovered, which we call Sun's degree formula. To recall it, let $X=C\times \mathbb{P}^{1}$, $f:X\longrightarrow C$ and $\pi:X\longrightarrow \mathbb{P}^{1}$ be the projections. On a general fiber $f^{-1}(\xi)=X_{\xi}$, $E$ has the form
$$E\mid_{X_{\xi}}=\bigoplus_{i=1}^{n}\mathcal{O}_{X_{\xi}}(\alpha_{i})^{\oplus r_{i}}, \quad \alpha_{1}>\dots>\alpha_{n}.$$
The n-tuple $\alpha=(\alpha_{1}^{\oplus r_{1}},\dots,\alpha_{n}^{\oplus r_{n}})$ is called the generic splitting type of $E$. Tensoring $E$ by $\pi^{*}\mathcal{O}_{\mathbb{P}^{1}}(-\alpha_{n})$, we can assume without loss of generality that $\alpha_{n}=0$. Any such $E$ admits a relative Harder-Narasimhan filtration
$$0=E_{0}\subset E_{1}\subset\dots\subset E_{n}=E$$
in which the quotient sheaves $F_{i}=E_{i}/E_{i-1}$ are torsion-free with generic splitting type $(\alpha_{i}^{\oplus r_{i}})$ respectively. Let $F'_{i}=F_{i}\bigotimes \pi^{*}\mathcal{O}_{\mathbb{P}^{1}}(-\alpha_{i}) (i=1,\dots,n)$, thus they have generic splitting type $(0^{\oplus r_{i}})$ respectively. Without risk of confusion, we denote the degree of $F_{i}\  (\text{resp. } E_{i})$ on the general fiber of $\pi$ by $\text{deg}(F_{i})\ (\text{resp. }\text{deg}(E_{i}))$. Accordingly, $\mu(E_{i})\ (\text{resp. }\mu (E))$ denotes the slope of restriction of $E_{i}\ (\text{resp. } E)$ to the general fiber of $\pi$ respectively. Let $\text{rk}(E_{i})$ denote the rank of $E_{i}$. Then we have the following so called Sun's degree formula ( see the formula (2.2) in \cite{Sun05})
\begin{equation}
\text{deg}(\phi^{*}(\Theta))
=r\sum_{i=1}^{n}c_{2}(F'_{i})+\sum_{i=1}^{n-1}(d\text{rk}(E_{i})-r\text{deg}(E_{i}))(\alpha_{i}-\alpha_{i+1}).
\label{eq:2.1}
\end{equation}
When  $\phi: \mathbb{P}^{1}\longrightarrow M$ is a small rational curve, $\text{deg}(\phi^{*}(\Theta))<r$, we have $n\geq 2,\  c_{2}(F'_{i})=0$ and $F'_{i}=f^{*}{V_{i}}$ for some locally free sheaf $V_{i}$ on $C$ by Lemma 2.2 in \cite{Sun05}. So $F_{i}=f^{*}V_{i}\bigotimes \pi^{*}\mathcal{O}_{\mathbb{P}^{1}}(\alpha_{i})$ and $E$ can be obtained by a sequence of extensions
  $$0\longrightarrow f^{*}V_{1}\bigotimes\pi^{*}\mathcal{O}_{\mathbb{P}^{1}}(\alpha_{1})\longrightarrow E_{2}\longrightarrow f^{*}V_{2}\bigotimes\pi^{*}\mathcal{O}_{\mathbb{P}^{1}}(\alpha_{2})\longrightarrow 0$$
  $$0\longrightarrow E_{2}\longrightarrow E_{3}\longrightarrow f^{*}V_{3}\bigotimes\pi^{*}\mathcal{O}_{\mathbb{P}^{1}}(\alpha_{3})\longrightarrow 0$$
  $$\vdots$$
  $$0\longrightarrow E_{n-1}\longrightarrow E_{n}=E\longrightarrow f^{*}V_{n}\longrightarrow 0.$$
  When $d=1$, we have the following elementary lemma:

\begin{Lemma}
  (1) $E_{i}$ are families of semi-stable bundles of degree 0 on $C$ parametrised by $\mathbb{P}^{1}$ for all $1\leq i\leq n-1$.

 (2) $V_{i}$ are semi-stable bundles of degree 0 on $C$ for all $1\leq i\leq n-1$, and $V_{n}$ is stable of degree 1.
  \label{lm:claims}
\end{Lemma}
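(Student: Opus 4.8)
The plan is to reduce everything to one numerical fact: that for a small rational curve every $E_i$ with $i\le n-1$ has degree $0$ on the general $\pi$-fibre $C_t=\pi^{-1}(t)$. First I would record that each $E_i$ is locally free on $X$, since it is built by successive extensions of the locally free sheaves $F_j=f^*V_j\otimes\pi^*\mathcal{O}_{\mathbb{P}^1}(\alpha_j)$, and an extension of a locally free sheaf by a locally free sheaf is locally free; the same reasoning shows $E/E_i$ is locally free. Restricting $0\to E_i\to E\to E/E_i\to 0$ to any $C_t$ therefore stays exact, so $E_i|_{C_t}$ is a genuine subbundle of $E|_{C_t}$ for \emph{every} $t\in\mathbb{P}^1$, of rank $\text{rk}(E_i)$ and degree $\deg(E_i)$. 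Since $E|_{C_t}$ is a point of $M$ it is stable of slope $1/r$, so $\mu(E_i|_{C_t})<1/r$, which forces $\text{rk}(E_i)-r\deg(E_i)\ge 1$ and in particular $\deg(E_i)\le 0$.

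Next I would feed this into Sun's degree formula. For a small rational curve, Lemma 2.2 of \cite{Sun05} gives $c_2(F'_i)=0$, so \eqref{eq:2.1} becomes $\deg(\phi^*\Theta)=\sum_{i=1}^{n-1}(\text{rk}(E_i)-r\deg(E_i))(\alpha_i-\alpha_{i+1})$, a sum of products of positive integers (the factors $\alpha_i-\alpha_{i+1}\ge 1$, as the $\alpha_i$ are strictly decreasing integers). If some $\deg(E_{i_0})\le -1$ then $\text{rk}(E_{i_0})-r\deg(E_{i_0})\ge r+1$, so that single term already exceeds $r$, contradicting $\deg(\phi^*\Theta)<r$. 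Hence $\deg(E_i)=0$ for all $1\le i\le n-1$. Semistability of $E_i|_{C_t}$ is then immediate: a destabilizing subbundle would have degree $\ge 1$ and rank $\le r-1$, hence slope $\ge 1/(r-1)>1/r$, contradicting stability of $E|_{C_t}$. This proves (1), and the degree statements in (2) follow from $\deg(V_i)=\deg(E_i)-\deg(E_{i-1})$, giving $\deg(V_i)=0$ for $i\le n-1$ and $\deg(V_n)=\deg(E)=1$.

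For the semistability in (2) I would argue on quotients. Each $V_i=E_i|_{C_t}/E_{i-1}|_{C_t}$ is a quotient of $E_i|_{C_t}$; for $i\le n-1$ the latter is semistable of slope $0$ by (1), and every quotient of $V_i$ is also a quotient of $E_i|_{C_t}$, hence has slope $\ge 0=\mu(V_i)$, so $V_i$ is semistable. The case of $V_n$ is the real obstacle, because $V_n=E|_{C_t}/E_{n-1}|_{C_t}$ has slope $1/r_n>1/r$, so the "quotient of a semistable sheaf" shortcut is unavailable. Here I would instead pull destabilizing data back to $E|_{C_t}$: given a saturated $W\subset V_n$ with $0<w:=\text{rk}(W)<r_n$ and $\mu(W)\ge 1/r_n$, integrality of $\deg(W)$ forces $\deg(W)\ge 1$. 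Its preimage $\tilde W\subset E|_{C_t}$ satisfies $\text{rk}(\tilde W)=(r-r_n)+w$ and, since $\deg(E_{n-1}|_{C_t})=0$, also $\deg(\tilde W)=\deg(W)\ge 1$. Stability of $E|_{C_t}$ then yields $r\le r\deg(\tilde W)<\text{rk}(\tilde W)=r-r_n+w\le r-1$, a contradiction, so no such $W$ exists and $V_n$ is stable. The two places where the smallness hypothesis and the normalization $d=1$ are used essentially are the degree-formula step (which upgrades $\deg(E_i)\le 0$ to $\deg(E_i)=0$) and this final numerical estimate for $V_n$.
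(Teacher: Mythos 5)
Your proposal is correct and takes essentially the same route as the paper: stability of $E|_{C_t}$ gives $\deg(E_i)\le 0$, Sun's degree formula upgrades this to $\deg(E_i)=0$ (via the same single-term estimate $\ge r+1$), and semistability of $E_i|_{C_t}$ and of the $V_i$ is transferred from stability of $E|_{C_t}$ exactly as in the paper. The only cosmetic differences are your explicit verification that the $E_i$ and $E/E_i$ are locally free (so restriction to fibers stays exact), and your treatment of $V_n$ via preimages of subsheaves in $E|_{C_t}$, where the paper dually passes to quotients of $E|_{C_t}$ to get semistability and then concludes stability from $\deg(V_n)=1$.
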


\begin{proof}
   (1) Since $E$ is a family of stable bundles of slope $\mu (E)=\frac{1}{r}$, all the proper subbundles $E_{i}\ (1\leq i\leq n-1)$ have degrees $\text{deg}(E_{i})\leq 0$. On the other hand, by the degree formula, we have $\text{deg}(E_{i})\geq 0$ for all $1\leq i\leq n-1$. Otherwise, there is an $E_{i_{0}}$ of degree $\text{deg}(E_{i_{0}})\leq -1$ for some $1\leq i_{0}\leq n-1$, and then $\text{deg}(\phi^{*}(\Theta))\geq(\text{rk}(E_{i_{0}})-r\text{deg}(E_{i_{0}}))(\alpha_{i_{0}}-\alpha_{i_{0}+1})\geq 1+r >r$, which contradicts to $\text{deg}(\phi^{*}(\Theta))<r$. Hence $\text{deg}(E_{i})=0$ for all $1\leq i\leq n-1$.

  For any proper subbundle $F$ of the restriction of $E_{i}$ to a fiber of $\pi$ , we consider $F$ as a subbundle of the restriction of $E$ to this fiber, and then we have $\mu(F)<\mu(E)=\frac{1}{r}$ by the stability of the family $E$, so $\text{deg}(F)\leq 0$ and $\mu(F)\leq0=\mu (E_{i})$. This implies that $E_{i}$ is a family of semi-stable bundles parametrised by $\mathbb{P}^{1}$.

  (2) It's easy to prove that $V_{i}$ ($1\leq i\leq n-1$) is of degree 0 and that $V_{n}$ is of degree 1.

  For $i=1$, since $f^{*}V_{1}\bigotimes \pi^{*}\mathcal{O}_{\mathbb{P}^{1}}(\alpha_{1})=E_{1}$ is a family of semi-stable bundles, $V_{1}$ is semi-stable.

  For any proper quotient sheaf $Q$ of $V_{i}$ ($2\leq i\leq n-1 $), we consider $Q$ as a quotient sheaf of the restriction of $E_{i}$ to a general fiber, and then we have $\mu(Q)\geq \mu(E_{i})=0=\mu(V_{i})$, Hence, $V_{i}$ ($2\leq i\leq n-1 $)  is semi-stable of degree 0.

   For any proper quotient sheaf $Q$ of $V_{n}$, we consider $Q$ as a proper quotient sheaf of the restriction of $E$ to a general fiber, and we have $\mu(Q)>\mu(E)=\frac{1}{r}$, so $\text{deg}(Q)\geq1$ and $\mu(Q)\geq\frac{1}{\text{rk}(Q)}\geq\frac{1}{\text{rk}(V_{n})}=\mu(V_{n})$, which implies that $V_{n}$ is semi-stable and hence stable.
   \end{proof}

 When $d=1$, Sun's degree formula (\ref{eq:2.1}) for a small rational curve is
   \begin{equation}
   \text{deg}(\phi^{*}(\Theta))=\sum_{i=1}^{n-1}\text{rk}(E_{i})(\alpha_{i}-\alpha_{i+1}).
   \label{eq:1}
   \end{equation}

 Now, let's estimate the dimension and codimension of the locus of small rational curves.
Let $n\leq r $ be a positive integer, and fix $n$ positive integers $r_{1}<\cdots<r_{n-1}<r_{n}=r$ such that
\begin{equation}
 \sum_{i=1}^{n-1}r_{i}(\alpha_{i}-\alpha_{i+1})<r
 \label{eq:2}
 \end{equation}
for some integers $\alpha_{1}>\dots>\alpha_{n-1}>\alpha_{n}$.

Let $d_{1}=\cdots=d_{n-1}=0$ and $d_{n}=1$. We define a subset $S_{r_{1},\cdots, r_{n}}$ of $M=SU_{C}(r,\mathcal{L})$ as follows:
 \begin{equation}
 S_{r_{1},\cdots, r_{n}}:=
 \left\{[W]\in M \Bigg|
      \begin{aligned} W \text{ has a filtration }0=W_{0}\subset W_{1}\subset\cdots \subset W_{n}=W  \\
       \text{such that } W_{i}/W_{i-1}
      \text{ is a vector }\text{bundle of rank } \\
      r_{i}-r_{i-1} \text{ and degree } d_{i}-d_{i-1} \text{ for any }1\leq i\leq n
      \end{aligned}
 \right\}.
 \end{equation}

From above description, we know that all small rational curves lie in
 $$S=\bigcup_{0<r_{1}<\cdots<r_{n}=r} S_{r_{1},\cdots, r_{n}},$$
 where $r_{1}<\cdots<r_{n}$ runs over $n$ positive integers $r_{i}$ that satisfying inequality (\ref{eq:2}) for some $n$ and some integers $\alpha_{1}>\dots>\alpha_{n}$.

   \begin{Proposition} (i) $S_{r_{1},\cdots, r_{n}}$ is Zariski-closed in $M$, and therefore can be regarded as a reduced subscheme.

     (ii) There is a scheme $\Sigma$ such that $C\times \Sigma$ carries a rank $r$ vector bundle $\mathbb{E}$ and a filtration
           $$0\subset \mathbb{E}_{1}\subset \cdots \subset \mathbb{E}_{n-1}\subset \mathbb{E}_{n}=\mathbb{E}$$
     such that  $\mathbb{E}_{i}/\mathbb{E}_{i-1}$ is a vector bundle of rank $r_{i}-r_{i-1}$, and for any $\sigma\in \Sigma$, the restriction of $\mathbb{E}_{i}/\mathbb{E}_{i-1}$ to $C\times \{\sigma\}$ has degree $d_{i}-d_{i-1}$.
     \end{Proposition}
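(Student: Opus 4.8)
The plan is to realize $S_{r_{1},\cdots,r_{n}}$ as the image of a \emph{proper} flag scheme, and then to use the stability of the bundles parametrized by $M$ to pin down the successive quotients. Since $\gcd(r,\deg\mathcal{L})=\gcd(r,1)=1$, the moduli space $M$ is fine, so I fix a universal bundle $\mathcal{W}$ on $C\times M$ (alternatively the same argument runs on an open subset $R$ of a Quot scheme carrying a universal quotient, with the good quotient $R\to M$, after which one descends the closed invariant loci). Iterating the relative Quot scheme $\mathrm{Quot}(\mathcal{W}/M)$, I would form the scheme $\mathrm{Fl}\to M$ whose fibre over $[W]$ parametrizes filtrations by subsheaves $0\subset\mathcal{F}_{1}\subset\cdots\subset\mathcal{F}_{n-1}\subset W$ with $\mathcal{F}_{i}$ of rank $r_{i}$ and degree $0$ (equivalently, $W/\mathcal{F}_{i}$ has the Hilbert polynomial of a rank $r-r_{i}$, degree $1$ sheaf). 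Because a relative Quot scheme with fixed Hilbert polynomial is projective over its base, the iterated scheme $\mathrm{Fl}\to M$ is projective, hence proper, so its image in $M$ is Zariski-closed.

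The heart of the matter is to identify this image with $S_{r_{1},\cdots,r_{n}}$. The definition of $S_{r_{1},\cdots,r_{n}}$ demands that the successive quotients be \emph{vector bundles} of prescribed rank and degree, a condition that mixes the closed data of Hilbert polynomials with the a priori open condition of local freeness. Stability dissolves this tension: since $W$ is stable of slope $1/r$, a rank-$r_{i}$ subsheaf of degree $0$ already has the maximal degree permitted by stability, hence is saturated, i.e.\ a subbundle with locally free quotient; and as $\mathcal{F}_{i-1}$ is then saturated in $\mathcal{F}_{i}\subset W$, each $\mathcal{F}_{i}/\mathcal{F}_{i-1}$ embeds in a torsion-free sheaf and so is automatically locally free of rank $r_{i}-r_{i-1}$ and degree $d_{i}-d_{i-1}$. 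Conversely, any filtration as in the definition of $S_{r_{1},\cdots,r_{n}}$ is in particular a filtration by subsheaves of these Hilbert polynomials. Hence $[W]$ lies in the image of $\mathrm{Fl}$ if and only if $[W]\in S_{r_{1},\cdots,r_{n}}$, which proves (i); endowing the closed set with its reduced induced structure gives the reduced closed subscheme.

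For (ii) I would take $\Sigma=\mathrm{Fl}$ (or its reduction), let $\mathbb{E}$ be the pullback of $\mathcal{W}$ to $C\times\Sigma$, and let $\mathbb{E}_{i}$ be the tautological subsheaves carried by the flag scheme. By construction each $\mathbb{E}_{i}/\mathbb{E}_{i-1}$ is flat over $\Sigma$ with the prescribed Hilbert polynomial, and by the discussion above its restriction to every fibre $C\times\{\sigma\}$ is locally free of rank $r_{i}-r_{i-1}$ and degree $d_{i}-d_{i-1}$. Since a sheaf flat over $\Sigma$ whose fibres over the smooth morphism $C\times\Sigma\to\Sigma$ are all locally free is itself locally free on $C\times\Sigma$, each $\mathbb{E}_{i}/\mathbb{E}_{i-1}$ is genuinely a vector bundle, and the tautological $\mathbb{E}_{\bullet}$ is the required filtration. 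The evaluation map $\Sigma\to M$, $\sigma\mapsto[\mathbb{E}_{\sigma}]$, has image exactly $S_{r_{1},\cdots,r_{n}}$, which is what will make $\Sigma$ usable for the later codimension estimate.

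The step I expect to be the genuine obstacle is the identification in the second paragraph. Without stability, properness of the flag scheme would only prove closedness of the larger locus of bundles admitting a rank-$r_{i}$ subsheaf of degree $\geq 0$ (non-saturated subsheaves saturate to subbundles of strictly larger degree), and the prescribed locally-free subquotients would cut out only a locally closed piece. The argument that a degree-$0$ subsheaf of a stable degree-$1$ bundle is forced to be a subbundle — and, relatedly, that the graded pieces are then automatically semistable of degree $0$ (resp.\ stable of degree $1$ for the top piece), in accordance with Lemma~\ref{lm:claims} — is precisely what closes this gap and simultaneously secures the fibrewise properties of the family in (ii).
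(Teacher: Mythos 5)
Your proof is correct, and its skeleton is the same as the paper's: realize $S_{r_{1},\cdots,r_{n}}$ as the image of an iterated relative Quot scheme, use projectivity of Quot over its base to get properness and hence a closed image, and take the tautological filtration on that scheme for part (ii). Two differences are worth recording. First, the paper never invokes fineness of $M$: it works on the parameter scheme $R$ of the GIT construction (where a universal family exists tautologically), forms the iterated Quot schemes over the closure $\overline{R}$, and then pushes closedness down to $M$ using the fact that a good quotient maps closed $G$-invariant subsets to closed subsets. Your shortcut through a universal bundle on $C\times M$ is legitimate here precisely because $\gcd(r,\deg\mathcal{L})=\gcd(r,1)=1$, but it is special to the coprime case, whereas the paper's descent argument is not. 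Second, where the paper is terse you are sharper: the paper simply declares $Q_{i-1}$ to be ``the closed subscheme of $\mathrm{Quot}_{C\times Q_{i}/Q_{i}}(\mathbb{E}'_{i})$ parametrizing quotients which restrict to vector bundles'' of the given rank and degree. Since fixing the Hilbert polynomial cuts out a union of connected components (closed) while local freeness of the quotient is an open condition, that locus is a priori only locally closed over $\overline{R}$, and an argument is needed for the properness step. Your stability/saturation observation --- a degree-$0$ subsheaf of a stable bundle of slope $1/r$ already has the maximal degree allowed, hence is saturated, so over the stable locus every quotient with the prescribed Hilbert polynomial is automatically locally free --- is exactly what shows that over $R$ (equivalently over $M$) this locus is the whole projective fixed-Hilbert-polynomial Quot scheme, and it simultaneously identifies the image with $S_{r_{1},\cdots,r_{n}}$; the paper leaves this point implicit. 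The same remark, combined with the standard criterion that a sheaf flat over the base whose fibres are locally free is itself locally free, makes your version of (ii) complete, and your $\Sigma$ serves the later dimension count just as the paper's $\Sigma=R\times_{\overline{R}}Q_{1}$ does.
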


\begin{proof}
By the construction of the moduli space, $M$ can be regarded as a good quotient of a subscheme $R$ of a quotient scheme by a reductive group $G$. There is a vector bundle $\mathbb{E}'$ on $C\times R$ whose restriction to $C\times \{q\}$ is the bundle represented by the image of $q$ in $M$. $\mathbb{E}'$ extends to a coherent sheaf (denoted by the same symbol) on the closure $C\times \overline{R}$.

Now we will define $\{Q_{i} \text{ and } \mathbb{E}'_{i} \text{ on } C\times Q_{i}\}_{n\geq i\geq 1}$ by descent induction.

Set $Q_{n}= \overline{R}$ and $\mathbb{E}'_{n}=\mathbb{E}'$.

For any  $n\geq i$ $(i>1)$, we assume that $Q_{i} \text{ and } \mathbb{E}'_{i} \text{ on } C\times Q_{i}$ have been constructed. Let $Q_{i-1}$ be the closed subscheme of relative quotient scheme $\text{Quot}_{C\times Q_{i}/Q_{i}}(\mathbb{E}'_{i})$ parametrizing quotients which restrict to vector bundles of rank $r_{i}-r_{i-1}$ and degree $d_{i}-d_{i-1}$ on each $C\times \{q\}$. It's known that there is a universal quotient bundle on $C\times Q_{i}\times_{Q_{i}}Q_{i-1}=C\times Q_{i-1}$
$$\mathbb{E}'_{i}\bigotimes \mathcal{O}_{C\times Q_{i-1}}\longrightarrow \mathcal{V}_{i}\longrightarrow 0.$$
Let $\mathbb{E}'_{i-1}=\text{Ker}(\mathbb{E}'_{i}\bigotimes \mathcal{O}_{C\times Q_{i-1}}\longrightarrow \mathcal{V}_{i})$. The intersection $\widetilde{S}$ of $R$ with the image  of the composition of projections $Q_{1}\rightarrow Q_{2}\rightarrow \cdots \rightarrow Q_{n-1}\rightarrow Q_{n}=\overline{R}$ is closed and $G$-invariant. Therefore the image of $\widetilde{S}$ in $M$, which is $S_{r_{1},\cdots, r_{n}}$, is closed.

Set $\Sigma=R\times_{\overline{R}}Q_{1}$ and $\mathbb{E}_{i}$ to the pullback of $\mathbb{E}'_{i}$ to $C\times \Sigma=(C\times Q_{i})\times_{Q_{i}} \Sigma$. Then it is easily seen that these have the required properties.
\end{proof}

Let $W$ be a vector bundle corresponding to a general point $[W]\in M$ of a component of $S_{r_{1},\cdots, r_{n}}$. Then $W$ has a filtration
$$0=W_{0}\subset W_{1}\subset W_{2}\subset\cdots \subset W_{n-1}\subset W_{n}=W$$
such that $V_{i}:=W_{i}/W_{i-1}$ is a vector bundle of rank $r_{i}-r_{i-1}$ and degree $d_{i}-d_{i-1}$ for any $1\leq i\leq n$. Let
$$a_{n}:=\text{dim }U_{C}(r_{n}-r_{n-1}, 1)+ \text{ext}^{1}(V_{n}, W_{n-1})-1,$$
and
 $$a_{i}:=\text{dim }U_{C}(r_{i}-r_{i-1}, 0)+ \text{ext}^{1}(V_{i}, W_{i-1})-1 \quad\text{ for any } \quad 1<i<n.$$
Then
\begin{equation}
 \text{dim }S_{r_{1},\cdots, r_{n}}\leq
\text{dim }U_{C}(r_{1}, 0)+a_{2}+\cdots +a_{n}-g.
\end{equation}

Since $\mu(V_{n})>\mu (W_{n-1})$, we have $\text{Hom}(V_{n}, W_{n-1})=0$
 and
$$\text{ext}^{1}(V_{n}, W_{n-1})=-\chi(V_{n}^{*}\otimes W_{n-1})=(r_{n}-r_{n-1})r_{n-1}(g-1)+r_{n-1}.$$
Hence
$$a_{n}=r_{n}(r_{n}-r_{n-1})(g-1)+r_{n-1}.$$

\begin{Lemma}
 If $V$ is a semi-stable vector bundle, then $h^{1}(V)\leq \text{rk}(V)\cdot g$ provided that $\text{rk}(V)+\text{deg}(V)\geq 0$.
\end{Lemma}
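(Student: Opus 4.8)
The plan is to pass to the dual bundle via Serre duality and then invoke a Clifford-type inequality for semistable bundles. Write $r=\text{rk}(V)$ and $d=\text{deg}(V)$, so the hypothesis $\text{rk}(V)+\text{deg}(V)\geq 0$ reads $d\geq -r$, i.e. $\mu(V)\geq -1$. Serre duality gives $h^{1}(V)=h^{0}(K_{C}\otimes V^{*})$, so it suffices to bound $h^{0}(W)$ for $W:=K_{C}\otimes V^{*}$. Since $V$ is semi-stable, $V^{*}$ is semi-stable, and tensoring by the line bundle $K_{C}$ preserves semistability; hence $W$ is a semi-stable bundle of rank $r$ and slope $\mu(W)=(2g-2)-\mu(V)$. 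The hypothesis $\mu(V)\geq -1$ then forces $\mu(W)\leq 2g-1$.

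Next I would split the argument according to where $\mu(W)$ sits relative to the interval $[0,2g-2]$. If $\mu(W)<0$, semistability forbids any nonzero section, since a section $\mathcal{O}_{C}\hookrightarrow W$ would saturate to a sub-line-bundle of nonnegative degree and hence nonnegative slope, contradicting $\mu(W)<0$; thus $h^{0}(W)=0\leq rg$. If $\mu(W)>2g-2$, then by Serre duality $h^{1}(W)=h^{0}(K_{C}\otimes W^{*})$, and $K_{C}\otimes W^{*}$ is semi-stable of slope $(2g-2)-\mu(W)<0$, so this vanishes; hence $h^{0}(W)=\chi(W)=r(g-1)-d$, which is at most $rg$ precisely because $d\geq -r$.

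The remaining, and main, case is $0\leq\mu(W)\leq 2g-2$, where I would use the Clifford inequality for semistable bundles, namely $h^{0}(W)\leq \tfrac{1}{2}\text{deg}(W)+r$. Substituting $\text{deg}(W)=r(2g-2)-d$ gives $h^{0}(W)\leq rg-\tfrac{d}{2}$. This range corresponds exactly to $0\leq\mu(V)\leq 2g-2$, so in particular $d\geq 0$, and therefore $h^{0}(W)\leq rg$, as desired.

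The main obstacle is this middle case: there the naive Euler-characteristic estimate is unavailable, since both $h^{0}(W)$ and $h^{1}(W)$ may be nonzero, so the argument genuinely rests on the generalized Clifford theorem for semistable bundles, which I would cite or establish separately. It only remains to check that the boundary values $\mu(W)=0$ and $\mu(W)=2g-2$ are consistent across the three cases, which they are (at $d=0$ both the Clifford and the Euler-characteristic estimates give $r(g-1)\leq rg$).
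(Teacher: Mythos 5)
Your proof is correct, but it takes a genuinely different route from the paper's. The paper argues by twisting down: if $\deg(V)<0$, semistability forces $h^{0}(V)=0$ and Riemann--Roch gives $h^{1}(V)=\operatorname{rk}(V)\cdot g-(\operatorname{rk}(V)+\deg(V))\leq \operatorname{rk}(V)\cdot g$ directly; if $\deg(V)\geq 0$, one twists by $\mathcal{O}_{C}(-mp)$ so that the degree lands in $[-\operatorname{rk}(V),0)$, and since the quotient $V/V(-m)$ is torsion, $H^{1}(V(-m))\twoheadrightarrow H^{1}(V)$, so the bound for the twisted bundle passes to $V$. Your argument instead dualizes via Serre duality and splits by the slope of $W=K_{C}\otimes V^{*}$: your two extreme cases ($\mu(W)<0$ and $\mu(W)>2g-2$) are in substance the paper's vanishing-plus-Riemann--Roch computations (indeed your second case, after unwinding the double Serre duality, is exactly the paper's negative-degree base case), while your middle case $0\leq\mu(W)\leq 2g-2$ rests on the generalized Clifford theorem for semistable bundles, $h^{0}(W)\leq\tfrac{1}{2}\deg(W)+\operatorname{rk}(W)$, which is a genuine external input that you would need to cite (it is standard, e.g.\ in work of Brambila-Paz--Grzegorczyk--Newstead, but far less elementary than anything the paper uses). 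Note also that in that range the full strength of Clifford is not needed: since $h^{1}(V)\leq rg$ is equivalent to $h^{0}(V)\leq \deg(V)+\operatorname{rk}(V)$ by Riemann--Roch, a weaker section bound would do, and the paper's twisting trick avoids the issue entirely. So the trade-off is: your proof buys a finer estimate ($h^{1}(V)\leq rg-\deg(V)/2$ when $\deg(V)\geq 0$) at the cost of importing Clifford, whereas the paper's induction is completely self-contained.
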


\begin{proof}
 If $\text{deg}(V)<0$, then $h^{0}(V)=0$ and by the Riemann-Roch , we have
$$h^{1}(V)=-\chi(V)=-(\text{deg}(V)+ \text{rk}(V)\cdot(1-g))=\text{rk}(V)\cdot g-(\text{rk}(V)+\text{deg}(V))\leq \text{rk}(V)\cdot g.$$
For $\text{deg}(V)\geq 0$, we can assume that the lemma holds for $V(-m)$ for some positive integer $m$ by induction. Therefore,
$$h^{1}(V)\leq h^{1}(V(-m))\leq\text{rk}(V(-m))\cdot g=\text{rk}(V)\cdot g$$
since $H^{1}(V/V(-m))=0$.
\end{proof}

From this lemma, for $2\leq i\leq n-1 $ we have
\begin{eqnarray*}
a_{i} &\leq&r_{i}(r_{i}-r_{i-1})(g-1)+r_{i-1}(r_{i}-r_{i-1}),
\end{eqnarray*}
and
\begin{eqnarray*}
\text{dim }U_{C}(r_{i-1}, 0)+a_{i}&\leq &r_{i-1}^{2}(g-1)+1+r_{i}(r_{i}-r_{i-1})(g-1)+r_{i-1}(r_{i}-r_{i-1})\\
&=& \text{dim }U_{C}(r_{i}, 0)-r_{i-1}(r_{i}-r_{i-1})(g-2).
\end{eqnarray*}
Hence
\begin{eqnarray*}
\text{dim }S_{r_{1},\cdots, r_{n}}&\leq& \text{dim}U_{C}(r_{1}, 0)+a_{2}+\cdots +a_{n}-g\\
&\leq & \cdots \leq \text{dim }U_{C}(r_{n-1},0)-\sum_{i=2}^{n-1}r_{i-1}(r_{i}-r_{i-1})(g-2)+a_{n}-g\\
&=&(r_{n}^{2}-1)(g-1)-(\sum_{i=2}^{n}r_{i-1}(r_{i}-r_{i-1})(g-2)+r_{n-1}(r_{n}-r_{n-1}-1)).
\end{eqnarray*}
Therefore
\begin{equation}
\text{Codim }S_{r_{1},\cdots, r_{n}}\geq \sum_{i=2}^{n}r_{i-1}(r_{i}-r_{i-1})(g-2)+r_{n-1}(r_{n}-r_{n-1}-1).
\label{eq:3}
\end{equation}

\begin{Theorem}
Any small rational curve in $M$ lies in a closed subset
$$S=\bigcup_{0<r_{1}<\cdots, <r_{n}=r} S_{r_{1},\cdots, r_{n}}$$
 of codimension at least
$$\text{min}_{0<r_{1}<\cdots <r_{n}=r}\{\sum_{i=2}^{n}r_{i-1}(r_{i}-r_{i-1})(g-2)+r_{n-1}(r_{n}-r_{n-1}-1)\}.$$
 where $0<r_{1}<\cdots <r_{n}=r$ runs over $n$ positive integers $r_{i}$ that satisfying $\sum_{i=1}^{n-1}r_{i}(\alpha_{i}-\alpha_{i+1})<r$ for some $n$ and some integers $\alpha_{1}>\dots>\alpha_{n-1}>\alpha_{n}$.
\end{Theorem}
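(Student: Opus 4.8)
The plan is to assemble the Theorem from the structural results already established in this section, since the statement is essentially a synthesis of Lemma~\ref{lm:claims}, the reduced degree formula~(\ref{eq:1}), the closedness Proposition, and the codimension estimate~(\ref{eq:3}). First I would prove the containment. Given any small rational curve $\phi:\mathbb{P}^{1}\to M$, it is defined, following \cite{Sun05}, by a bundle $E$ on $C\times\mathbb{P}^{1}$ carrying the relative Harder--Narasimhan filtration $0=E_{0}\subset E_{1}\subset\cdots\subset E_{n}=E$. By Lemma~\ref{lm:claims} the cumulative ranks $r_{i}=\mathrm{rk}(E_{i})$ satisfy $0<r_{1}<\cdots<r_{n}=r$, the degrees are $\deg(E_{i})=0$ for $i<n$, and the graded pieces $V_{i}$ are semistable of degree $0$ for $i<n$ and stable of degree $1$ for $i=n$. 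Restricting $E$ and its filtration to a general fiber $C\times\{\xi\}$ yields the stable bundle $W=E|_{C\times\{\xi\}}$ representing a point of the curve, together with a filtration whose $i$-th graded piece has rank $r_{i}-r_{i-1}$ and degree $d_{i}-d_{i-1}$; hence $[W]\in S_{r_{1},\cdots,r_{n}}$.

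Next I would verify that the rank tuple $(r_{1},\cdots,r_{n})$ is admissible for the union. Because $\phi$ is small, the reduced formula~(\ref{eq:1}) gives $\sum_{i=1}^{n-1}r_{i}(\alpha_{i}-\alpha_{i+1})=\deg(\phi^{*}\Theta)<r$, which is exactly condition~(\ref{eq:2}); moreover $n\geq2$. Thus every small rational curve meets some admissible $S_{r_{1},\cdots,r_{n}}$ in at least its general point. Since each $S_{r_{1},\cdots,r_{n}}$ is Zariski-closed by the Proposition above, the general fibers lying in $S_{r_{1},\cdots,r_{n}}$ force the whole image $\phi(\mathbb{P}^{1})$ into $S_{r_{1},\cdots,r_{n}}\subset S$, establishing $\phi(\mathbb{P}^{1})\subset S$.

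Finally I would extract the codimension bound. The index set of admissible tuples is finite: the inequalities $0<r_{1}<\cdots<r_{n}=r$ leave only finitely many rank sequences, and~(\ref{eq:2}) is solvable in integers $\alpha_{1}>\cdots>\alpha_{n}$ precisely when $\sum_{i=1}^{n-1}r_{i}<r$ (minimizing the left side by taking each gap $\alpha_{i}-\alpha_{i+1}=1$), a condition on the ranks alone. Hence $S$ is a finite union of closed subsets, so it is closed and $\mathrm{codim}\,S=\min\mathrm{codim}\,S_{r_{1},\cdots,r_{n}}$ over admissible tuples. Combining this with the per-piece estimate~(\ref{eq:3}) gives the asserted lower bound, where the minimum over a possibly larger admissible set only weakens, and hence still proves, the inequality.

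The genuine mathematical content---Lemma~\ref{lm:claims}, the closedness of $S_{r_{1},\cdots,r_{n}}$, and above all the dimension count producing~(\ref{eq:3}) via the $h^{1}$-bound Lemma---has already been carried out, so the Theorem is a formal consequence of the preceding work. The only points demanding care are the reduction from ``the general fiber lies in $S_{r_{1},\cdots,r_{n}}$'' to ``the whole curve does,'' which I would handle by closedness, and the elementary observation that for a \emph{finite} union of closed sets the codimension equals the minimum of the codimensions. I do not expect a substantial obstacle beyond this bookkeeping.
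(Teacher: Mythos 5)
Your proposal is correct and takes essentially the same approach as the paper: the Theorem appears there with no separate proof precisely because it is the synthesis of Lemma~\ref{lm:claims}, the reduced degree formula~(\ref{eq:1}), the closedness of each $S_{r_{1},\cdots,r_{n}}$, and the estimate~(\ref{eq:3}), which is exactly what you assemble. The extra bookkeeping you supply (finiteness of the admissible set of rank tuples, and passing from the general point of the curve to its whole image via closedness) is the part the paper leaves implicit.
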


\section{Small Rational Curves on $M=SU_{C}(3,\mathcal{L})$ with $\text{deg}\mathcal{L}=1$}
In this section, we want to determine all small rational curves on $M=SU_{C}(3, \mathcal{\mathcal{L}})$  with $\text{deg}\mathcal{L}=1$.

In this special case, Sun's degree formula (\ref{eq:2.1}) becomes
   $$\text{deg}(\phi^{*}(\Theta))=3\sum_{i=1}^{n}c_{2}(F'_{i})+
   \sum_{i=1}^{n-1}(\text{rk}E_{i}-3\text{deg}(E_{i}))(\alpha_{i}-\alpha_{i+1}).$$
For $M=SU_{C}(3,\mathcal{L})$, the degree of a small rational curve is either 1 or 2 with respect to $\Theta$. When $\text{deg}(\phi^{*}(\Theta))=1$, $\phi: \mathbb{P}^{1}\rightarrow M$ is a line, which has been studied in \cite{Sun05} and \cite{MokSun09}. Now we consider the case $\text{deg}(\phi^{*}(\Theta))=2$, and in the rest of this paper we say small rational curves only for this case.

 From section 2 and the degree formula (\ref{eq:1}) of a small rational curve
   $$2=\text{deg}(\phi^{*}(\Theta))=\sum_{i=1}^{n-1}\text{rk}E_{i}(\alpha_{i}-\alpha_{i+1}),$$
 any rational curve $\phi: \mathbb{P}^{1}\longrightarrow M$ is determined by a vector bundle $E$ on $X=C\times \mathbb{P}^{1}$ satisfying one of the following conditions:

  \textbf{(A)} The vector bundle $E$ fits in an exact sequence
   $$0\longrightarrow f^{*}V_{1}\bigotimes \pi^{*}\mathcal{O}_{\mathbb{P}^{1}}(2)\longrightarrow E\longrightarrow f^{*}V_{2}\longrightarrow 0,$$
   where $V_{1}, V_{2}$ are of rank 1, 2 and degrees 0, 1 respectively.

   \textbf{(B)} The vector bundle $E$ fits in an exact sequence
   $$0\longrightarrow f^{*}V_{1}\bigotimes \pi^{*}\mathcal{O}_{\mathbb{P}^{1}}(1)\longrightarrow E\longrightarrow f^{*}V_{2}\longrightarrow 0,$$
   where $V_{1}, V_{2}$ are of rank 2, 1 and degrees 0, 1 respectively.

\subsection{The constructions of small rational curves}
Let $U_{C}(2,1)$ be the moduli space of stable vector bundles of rank 2 and degree 1. It's known that $U_{C}(2,1)$ is a smooth projective variety and there is a universal vector bundle $\mathcal{V}$ on $C\times U_{C}(2,1)$. Let $J_{C}$ be the Jacobian of the curve $C$ and $\mathfrak{L}$ be a Poincare bundle on $C\times J_{C}$. Consider the morphism
\[\begin{CD}
J_{C}\times U_{C}(2,1)@>(\cdot)\times \text{det}(\cdot)>>J_{C}\times J_{C}^{1}
@>(\cdot)\otimes(\cdot)>>J_{C}^{1}
\end{CD} \]
and let $\mathcal{R}$ be its fiber at $[\mathcal{L}]\in J_{C}^{1}$. We still use $\mathcal{V}$ (resp. $\mathfrak{L}$) to denote the pullback on $C\times \mathcal{R}$ by the projection $C\times \mathcal{R}\longrightarrow C\times U_{C}(2,1)$ (resp. $C\times \mathcal{R}\longrightarrow C\times J_{C}$). Let $p:C\times \mathcal{R}\longrightarrow \mathcal{R}$ and $\mathcal{G}=R^{1}p_{*}(\mathcal{V}^{\vee}\bigotimes\mathfrak{L})$ which is a vector bundle of rank $2g-1$. Let $q: \mathcal{P}=\mathbb{P}(\mathcal{G})\longrightarrow \mathcal{R}$ be the projection bundle parametrizing 1-dimensional subspaces of $\mathcal{G}_{t}\ (t\in \mathcal{R})$ and $f: C\times \mathcal{P}\longrightarrow C$ and $\pi:C\times \mathcal{P}\longrightarrow \mathcal{P}$ be the projections. Then there is a universal extension
\begin{equation}
0\longrightarrow (Id_{C}\times q)^{*}\mathfrak{L}\bigotimes \pi^{*}\mathcal{O}_{\mathcal{P}}(1)\longrightarrow \mathcal{E}\longrightarrow (Id_{C}\times q)^{*}\mathcal{V}\longrightarrow 0
\label{eq:4}
\end{equation}
on $C\times \mathcal{P}$ such that for any point $x=([\xi],[V],[e])\in \mathcal{P}$, where $[\xi]\in J_{C},\ [V]\in U_{C}(2,1)$ with $\xi\bigotimes \text{det}(V)\simeq \mathcal{L}$ and $[e]\subset H^{1}(C, V^{\vee}\otimes \xi)$ being a line through the origin, the bundle $\mathcal{E}|_{C\times \{x\}}$ is the isomorphic class of vector bundles $E$ given by extensions
$$0\longrightarrow \xi \longrightarrow E \longrightarrow V\longrightarrow 0$$
that defined by vectors on the line $[e]\subset H^{1}(C, V^{\vee}\bigotimes \xi)$.

By [Lemma 3.1 in \cite{Sun05}], the vector bundle $\mathcal{E}$ given by the universal extension (\ref{eq:4}) on $C\times \mathcal{P}$ defines a morphism
$$\Phi: \mathcal{P}\longrightarrow SU_{C}(3,\mathcal{L})=M.$$

\begin{Lemma}
 Let $\alpha: \mathbb{P}^{1}\rightarrow \mathcal{P}$ be a morphism satisfying $\alpha^{*}\mathcal{O}_{\mathcal{P}}(1)\cong \mathcal{O}_{\mathbb{P}^{1}}(2)$, then $\alpha: \mathbb{P}^{1}\rightarrow \mathcal{P}$ is either a rational curve of degree 2 in $\mathcal{P}$ or a double cover of a line in $\mathcal{P}$.
\end{Lemma}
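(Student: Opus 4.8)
The plan is to reduce the statement to a one-line degree computation on curves, for which the only inputs are the numerical hypothesis $\alpha^{*}\mathcal{O}_{\mathcal{P}}(1)\cong\mathcal{O}_{\mathbb{P}^{1}}(2)$ and the projectivity of $\mathcal{P}$. In particular, no positivity property (nefness or ampleness) of $\mathcal{O}_{\mathcal{P}}(1)$ will be needed.

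First I would note that $\alpha$ cannot be constant, since otherwise $\alpha^{*}\mathcal{O}_{\mathcal{P}}(1)$ would be trivial, contradicting the hypothesis. Hence the image $C'=\alpha(\mathbb{P}^{1})$, with its reduced structure, is an integral projective curve in $\mathcal{P}$, and $\alpha$ factors as $\mathbb{P}^{1}\xrightarrow{\nu}C'\hookrightarrow\mathcal{P}$, where $\nu$ is a finite surjective morphism of integral projective curves and $\iota\colon C'\hookrightarrow\mathcal{P}$ is the inclusion. Write $d=\deg(\nu)\geq 1$ for the degree of this finite map.

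Next I would apply the projection formula. Since $\alpha^{*}\mathcal{O}_{\mathcal{P}}(1)=\nu^{*}\big(\iota^{*}\mathcal{O}_{\mathcal{P}}(1)\big)$ and $\nu_{*}[\mathbb{P}^{1}]=d\,[C']$, taking degrees gives
$$2=\deg\big(\alpha^{*}\mathcal{O}_{\mathcal{P}}(1)\big)=d\cdot\deg\big(\iota^{*}\mathcal{O}_{\mathcal{P}}(1)\big)=d\cdot\big(\mathcal{O}_{\mathcal{P}}(1)\cdot C'\big).$$
Set $m:=\big(\mathcal{O}_{\mathcal{P}}(1)\cdot C'\big)$, which is an integer. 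Its positivity is automatic from the hypothesis: as $d>0$ and $dm=2$, we get $m=2/d>0$. Thus $d$ and $m$ are positive integers with $dm=2$, which forces $(d,m)=(1,2)$ or $(d,m)=(2,1)$.

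Finally I would translate the two factorizations into the two conclusions. If $(d,m)=(1,2)$, then $\nu$ is birational, so $\alpha$ parametrizes $C'$ birationally and $C'$ has $\mathcal{O}_{\mathcal{P}}(1)$-degree $2$; this is a rational curve of degree $2$ in $\mathcal{P}$. If $(d,m)=(2,1)$, then $C'$ has $\mathcal{O}_{\mathcal{P}}(1)$-degree $1$, i.e.\ it is a line in $\mathcal{P}$, and $\nu$ realizes $\alpha$ as a degree-$2$ map onto it, i.e.\ a double cover of a line. The argument has no genuinely hard step; the only points requiring care are checking that $\alpha$ is non-constant (so that $C'$ is a curve and $d$ is well defined), and fixing the conventions that the degree of a curve in $\mathcal{P}$ is its intersection number with $\mathcal{O}_{\mathcal{P}}(1)$ and that a line is such a curve of degree $1$.
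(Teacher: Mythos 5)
Your proposal is correct and follows essentially the same route as the paper: factor $\alpha$ through its image curve, use multiplicativity of degree (the paper phrases it via the normalization $\rho:\widetilde{Y}\to Y$ and the factorization $\alpha=\rho\circ\alpha'$, you via the projection formula against the cycle $[C']$), and then split into the two cases $(d,m)=(1,2)$ and $(d,m)=(2,1)$. The only cosmetic difference is that you bypass the explicit normalization and note non-constancy of $\alpha$ up front, which the paper leaves implicit.
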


\begin{proof}
Let $Y:=\alpha(\mathbb{P}^{1})$ be the subvariety of $\mathcal{P}$ with the reduced structure and $\rho: \widetilde{Y}\rightarrow Y$ be its normalization. Then $\widetilde{Y}\cong \mathbb{P}^{1}$ and there is a morphism $\alpha':\mathbb{P}^{1}\rightarrow \widetilde{Y}$ such that $\alpha=\rho\circ\alpha'$. Thus
$$2=\text{deg}(\alpha^{*}\mathcal{O}_{\mathcal{P}}(1))=\text{deg}(\alpha')\cdot \text{deg}(\rho^{*}(\mathcal{O}_{\mathcal{P}}(1)|_{Y})),$$
and then we have
$$\text{deg}(\alpha')=1 \quad \text{and}\quad \text{deg}(\rho^{*}(\mathcal{O}_{\mathcal{P}}(1)|_{Y}))=2$$
or
$$\text{deg}(\alpha')=2 \quad \text{and}\quad \text{deg}(\rho^{*}(\mathcal{O}_{\mathcal{P}}(1)|_{Y}))=1.$$
The first case implies that $Y=\alpha(\mathbb{P}^{1})$ is a rational curve of degree 2 in $\mathcal{P}$ and $\alpha: \mathbb{P}^{1}\rightarrow \alpha(\mathbb{P}^{1})$ is the normalization of $\alpha(\mathbb{P}^{1})$. The second case implies that $Y=\alpha(\mathbb{P}^{1})$ is a line in $\mathcal{P}$ and $\alpha':\mathbb{P}^{1}\rightarrow \widetilde{Y}$ is a degree 2 morphism, i.e., $\alpha: \mathbb{P}^{1}\rightarrow \alpha(\mathbb{P}^{1})$ is a double cover of a line in $\mathcal{P}$.
\end{proof}

\begin{Remark}
 Let $\alpha: \mathbb{P}^{1}\rightarrow \mathcal{P}$ be a morphism satisfying $\alpha^{*}\mathcal{O}_{\mathcal{P}}(1)\cong \mathcal{O}_{\mathbb{P}^{1}}(p)$, where $p$ is a prime number. Then, similar to the proof of the above lemma, we can prove that $\alpha: \mathbb{P}^{1}\rightarrow \mathcal{P}$ is either a rational curve of degree $p$ in $\mathcal{P}$ or a \emph{$p$-fold cover of a line} in $\mathcal{P}$. In particular, when $p=3$, we call $\alpha: \mathbb{P}^{1}\rightarrow \mathcal{P}$ \emph{a triple cover of a line} in $\mathcal{P}$.
\end{Remark}

\begin{Proposition}
(1) For any rational curve of degree 2 in the fiber of $q: \mathcal{P}\longrightarrow \mathcal{R}$, its image is a small rational curve on $M$.

(2) For any double cover of a line in the fibre of $q: \mathcal{P}\longrightarrow \mathcal{R}$, its image is a small rational curve on $M$.
 \label{prop:3.4}
\end{Proposition}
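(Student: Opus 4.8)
The plan is to treat both cases uniformly and then to read off the $\Theta$-degree directly from Sun's formula. First I would observe that in each case the morphism $\alpha:\mathbb{P}^{1}\rightarrow \mathcal{P}$ satisfies $\alpha^{*}\mathcal{O}_{\mathcal{P}}(1)\cong \mathcal{O}_{\mathbb{P}^{1}}(2)$: a rational curve of degree $2$ in a fibre has $\text{deg}(\alpha^{*}\mathcal{O}_{\mathcal{P}}(1))=2$ by definition, whereas a double cover of a line is the composite of a degree $2$ morphism with the inclusion of a line, so again $\text{deg}(\alpha^{*}\mathcal{O}_{\mathcal{P}}(1))=2\cdot 1=2$. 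This is precisely the hypothesis of the preceding lemma, so the two cases are exactly the two possibilities it classifies.

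Next I would pull back the universal extension (\ref{eq:4}) along $\text{Id}_{C}\times \alpha$. Since $\alpha$ maps into a single fibre of $q$, the composite $q\circ\alpha$ is constant, equal to some $t=([\xi],[V])\in\mathcal{R}$; hence $(\text{Id}_{C}\times q)^{*}\mathfrak{L}$ and $(\text{Id}_{C}\times q)^{*}\mathcal{V}$ pull back to $f^{*}\xi$ and $f^{*}V$ respectively, while $\pi^{*}\mathcal{O}_{\mathcal{P}}(1)$ pulls back to $\pi^{*}\mathcal{O}_{\mathbb{P}^{1}}(2)$ by the previous paragraph. Thus on $X=C\times\mathbb{P}^{1}$ the pullback bundle $E$ sits in an exact sequence
$$0\longrightarrow f^{*}\xi\bigotimes \pi^{*}\mathcal{O}_{\mathbb{P}^{1}}(2)\longrightarrow E\longrightarrow f^{*}V\longrightarrow 0$$
of type \textbf{(A)}. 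Because $\Phi$ is a morphism, $E$ restricts to a stable bundle on each $C\times\{s\}$, so $\phi:=\Phi\circ\alpha:\mathbb{P}^{1}\rightarrow M$ is a genuine morphism classified by $E$.

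The core computation is to identify the relative Harder-Narasimhan filtration of $E$ and to plug it into formula (\ref{eq:2.1}). Restricting the displayed sequence to a fibre $X_{x}=\{x\}\times\mathbb{P}^{1}$ gives an extension of $\mathcal{O}_{\mathbb{P}^{1}}^{\oplus 2}$ by $\mathcal{O}_{\mathbb{P}^{1}}(2)$, whose class lies in $\text{Ext}^{1}(\mathcal{O}_{\mathbb{P}^{1}}^{\oplus 2},\mathcal{O}_{\mathbb{P}^{1}}(2))=H^{1}(\mathbb{P}^{1},\mathcal{O}_{\mathbb{P}^{1}}(2))^{\oplus 2}=0$; thus the sequence splits and $E\mid_{X_{x}}\cong \mathcal{O}_{\mathbb{P}^{1}}(2)\oplus\mathcal{O}_{\mathbb{P}^{1}}^{\oplus 2}$. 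Consequently the generic splitting type is $(2^{\oplus 1},0^{\oplus 2})$, $n=2$, and the displayed sequence is the relative HN filtration $0\subset E_{1}\subset E_{2}=E$ with $E_{1}=f^{*}\xi\bigotimes\pi^{*}\mathcal{O}_{\mathbb{P}^{1}}(2)$, $\alpha_{1}=2$, $\alpha_{2}=0$. Since $F'_{1}=f^{*}\xi$ and $F'_{2}=f^{*}V$ are pulled back from $C$, one has $c_{2}(F'_{1})=c_{2}(F'_{2})=0$, and $E_{1}$ restricts to $\xi$ on a general fibre of $\pi$, so $\text{rk}(E_{1})=1$ and $\text{deg}(E_{1})=0$.

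Substituting these values into (\ref{eq:2.1}) with $r=3$ and $d=1$ gives $\text{deg}(\phi^{*}\Theta)=3\cdot 0+(1\cdot 1-3\cdot 0)(2-0)=2$. Since $\text{deg}(\phi^{*}\Theta)=2>0$, the map $\phi$ is non-constant, hence is a small rational curve, which settles both (1) and (2). I expect the only delicate point to be the claim that the given extension really is the relative HN filtration; this rests entirely on the vanishing $H^{1}(\mathbb{P}^{1},\mathcal{O}_{\mathbb{P}^{1}}(2))=0$, which forces the fibrewise splitting and hence pins down the splitting type, so no genuine obstacle remains once that observation is in place.
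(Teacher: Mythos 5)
Your proposal is correct and takes essentially the same route as the paper: both observe that $\alpha^{*}\mathcal{O}_{\mathcal{P}}(1)\cong\mathcal{O}_{\mathbb{P}^{1}}(2)$ in either case, pull back the universal extension (\ref{eq:4}) to obtain an exact sequence of type (A) on $C\times\mathbb{P}^{1}$, and then invoke Sun's degree formula to conclude that the $\Theta$-degree is $2$. The only divergence is bookkeeping in that last step: the paper computes $c_{2}(E)=2$ and $c_{1}(E)^{2}=4$ from the extension and evaluates $\frac{1}{2}\Delta(E)=3c_{2}(E)-c_{1}(E)^{2}=2$, whereas you identify the relative Harder--Narasimhan filtration (via the fibrewise splitting forced by $H^{1}(\mathbb{P}^{1},\mathcal{O}_{\mathbb{P}^{1}}(2))=0$) and substitute its data into formula (\ref{eq:2.1}) --- two equivalent forms of the same computation.
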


\begin{proof}
Let $\alpha: \mathbb{P}^{1}\rightarrow \mathcal{P}$ be either a rational curve of degree 2 in the fibre of $q$ or a double cover of a line in the fibre of $q$, we always have $\alpha^{*}\mathcal{O}_{\mathcal{P}}(1)\cong \mathcal{O}_{\mathbb{P}^{1}}(2)$.
 Let $q(\alpha(\mathbb{P}^{1}))=([\xi],[V])\in \mathcal{R}$ and $E=(Id_{C}\times\alpha)^{*}\mathcal{E}$. Then the morphism
$$\Phi\circ \alpha: \mathbb{P}^{1}\longrightarrow M$$
is defined by $E$, which fits in an exact sequence
$$0\longrightarrow f^{*}\xi\bigotimes \pi^{*}\mathcal{O}_{\mathbb{P}^{1}}(2)\longrightarrow E \longrightarrow f^{*}V\longrightarrow 0.$$
Thus $c_{2}(E)=2$ and $c_{1}(E)^{2}=4$. Then the degree of $\text{deg}\Phi^{*}(\Theta)|_{\mathbb{P}^{1}}$ equals
\begin{eqnarray*}
 \frac{1}{2}\Delta(E)=3\times c_{2}(E)-\frac{1}{2}(3-1)c_{1}(E)^{2}=6-4=2,
\end{eqnarray*}
which implies that $\Phi(\alpha(\mathbb{P}^{1}))$ is a small rational curve in $M=SU_{C}(3,\mathcal{L})$ of degree 2 with respect to $\Theta$.
\end{proof}

\begin{Proposition}
 Let $\mathbb{P}^{1}\subseteq \mathcal{P}$ be a line, i.e., $\mathcal{O}_{\mathcal{P}}(1)|_{\mathbb{P}^{1}}\cong \mathcal{O}_{\mathbb{P}^{1}}(1)$, which is not in any fiber of $q$. Then
$\Phi|_{\mathbb{P}^{1}}: \mathbb{P}^{1}\longrightarrow M$ is a small rational curve if and only if $p_{2}\circ q(\mathbb{P}^{1})$ is a line in $SU_{C}(2,\mathcal{L'})$ for some line bundle $\mathcal{L'}$ of degree 1, where $p_{2}: \mathcal{R}\rightarrow U_{C}(2,1)$ is the projection.
\label{prop:3.5}
\end{Proposition}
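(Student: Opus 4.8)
The plan is to restrict the universal extension (\ref{eq:4}) to the line $\ell=\mathbb{P}^{1}\hookrightarrow\mathcal{P}$, read off the resulting family on $C\times\mathbb{P}^{1}$, and then compare $\Theta$-degrees on $M$ and on $SU_{C}(2,\mathcal{L}')$. Writing $\alpha:\mathbb{P}^{1}\to\mathcal{P}$ for the inclusion and $E=(\mathrm{Id}_{C}\times\alpha)^{*}\mathcal{E}$, the sequence (\ref{eq:4}) pulls back to
$$0\longrightarrow f^{*}\xi_{0}\bigotimes\pi^{*}\mathcal{O}_{\mathbb{P}^{1}}(1)\longrightarrow E\longrightarrow \mathcal{W}\longrightarrow 0,$$
where $\mathcal{W}=(\mathrm{Id}_{C}\times g)^{*}\mathcal{V}$ and $g=p_{2}\circ q\circ\alpha:\mathbb{P}^{1}\to U_{C}(2,1)$. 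First I would record a rigidity observation: since $\mathbb{P}^{1}$ admits no nonconstant map to the abelian variety $J_{C}$, the composite $p_{1}\circ q\circ\alpha$ is constant, so the sub-line-bundle really is a fixed $\xi_{0}$ of degree $0$, twisted by $\pi^{*}\mathcal{O}(1)$ because $\mathcal{O}_{\mathcal{P}}(1)|_{\ell}\cong\mathcal{O}_{\mathbb{P}^{1}}(1)$. Likewise $t\mapsto[\det\mathcal{W}|_{C\times\{t\}}]$ is a constant map to $J_{C}^{1}$, so all $\mathcal{W}|_{C\times\{t\}}$ share a determinant $\mathcal{L}'$ of degree $1$ and $g$ factors through $SU_{C}(2,\mathcal{L}')$. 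As $\ell$ is not in a fiber of $q$ while its $J_{C}$-component is constant, $g$ is nonconstant, so $\deg g^{*}\Theta'\geq 1$ (where $\Theta'$ denotes the ample generator of $\mathrm{Pic}(SU_{C}(2,\mathcal{L}'))$, a line being by definition a curve of minimal $\Theta'$-degree $1$); thus the phrase ``$p_{2}\circ q(\ell)$ is a line'' means exactly $\deg g^{*}\Theta'=1$.

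Next I would compute the two degrees by the discriminant, just as in Proposition \ref{prop:3.4}. From the sequence above, $c_{1}(E)=c_{1}(\mathcal{W})+c_{1}(\pi^{*}\mathcal{O}(1))$ and $c_{2}(E)=c_{2}(\mathcal{W})+c_{1}(\pi^{*}\mathcal{O}(1))\cdot c_{1}(\mathcal{W})$ on $C\times\mathbb{P}^{1}$, so that $\deg\phi^{*}\Theta=\tfrac{1}{2}\Delta(E)=3c_{2}(E)-c_{1}(E)^{2}$ is expressed through $c_{2}(\mathcal{W})$ and $c_{1}(\mathcal{W})^{2}$, while on the rank-$2$ side $\deg g^{*}\Theta'=\tfrac{1}{2}\Delta(\mathcal{W})=2c_{2}(\mathcal{W})-\tfrac{1}{2}c_{1}(\mathcal{W})^{2}$. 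The crux is the clean comparison $\deg\phi^{*}\Theta=\deg g^{*}\Theta'+1$. The efficient route is to identify the class of $\Phi^{*}\Theta$ in $\mathrm{Pic}(\mathcal{P})=q^{*}\mathrm{Pic}(\mathcal{R})\oplus\mathbb{Z}\,\mathcal{O}_{\mathcal{P}}(1)$: applying Grothendieck--Riemann--Roch to the determinant-of-cohomology bundle defining $\Theta$, together with the universal sequence (\ref{eq:4}), one expects $\Phi^{*}\Theta\equiv\mathcal{O}_{\mathcal{P}}(1)\otimes q^{*}\mathcal{M}$ with $\mathcal{M}\in\mathrm{Pic}(\mathcal{R})$ whose $U_{C}(2,1)$-component is $p_{2}^{*}\Theta'$. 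Since $\mathcal{O}_{\mathcal{P}}(1)|_{\ell}=\mathcal{O}(1)$ and the $J_{C}$-part of $\mathcal{M}$ restricts trivially to $\ell$ (constant $J_{C}$-map), evaluating on $\ell$ yields $\deg\phi^{*}\Theta=1+\deg g^{*}\Theta'$. As a consistency check, the same $\mathrm{Pic}(\mathcal{P})$-class restricted to a conic or double-line inside a \emph{fiber} of $q$ (where $\deg g^{*}\Theta'=0$ and $\alpha^{*}\mathcal{O}_{\mathcal{P}}(1)=\mathcal{O}(2)$) gives degree $2$, recovering Proposition \ref{prop:3.4} and confirming that the coefficient of $\mathcal{O}_{\mathcal{P}}(1)$ is $1$.

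With this identity the proposition follows at once: $\Phi|_{\ell}$ is a small rational curve (necessarily of degree $2$) iff $\deg g^{*}\Theta'=1$, i.e. iff $g(\ell)=p_{2}\circ q(\ell)$ is a line in $SU_{C}(2,\mathcal{L}')$; any nonconstant $g$ that is not a line has $\deg g^{*}\Theta'\geq 2$, hence $\deg\phi^{*}\Theta\geq 3$ and is not small, while the constant case is excluded since it would place $\ell$ in a fiber of $q$.

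I expect the main obstacle to be precisely this crux identity, and within it the determination of the two coefficients in the $\mathrm{Pic}(\mathcal{P})$-expansion of $\Phi^{*}\Theta$: pinning the coefficient of $\mathcal{O}_{\mathcal{P}}(1)$ to be exactly $1$ is what separates a genuine line in $\mathcal{P}$ from higher-degree curves, and it is exactly here that the normalization of the universal bundle $\mathcal{V}$ (equivalently of $\mathcal{O}_{\mathcal{P}}(1)$ and of the Poincar\'e bundle $\mathfrak{L}$) must be tracked carefully through the Riemann--Roch computation. A more hands-on alternative is to prove the numerical identity $3c_{2}(E)-c_{1}(E)^{2}=\big(2c_{2}(\mathcal{W})-\tfrac{1}{2}c_{1}(\mathcal{W})^{2}\big)+1$ directly, using that the line condition forces the extension class to define a sub-bundle $\mathcal{O}_{\mathbb{P}^{1}}(-1)\hookrightarrow R^{1}\pi_{*}(\mathcal{W}^{\vee}\otimes f^{*}\xi_{0})=h^{*}\mathcal{G}$ and then relating $\deg h^{*}\mathcal{G}$ to $c_{2}(\mathcal{W})$ and $c_{1}(\mathcal{W})^{2}$ by relative Riemann--Roch; this route, however, still feeds back into the same normalization input, so I would favor the global computation of $\Phi^{*}\Theta$ in $\mathrm{Pic}(\mathcal{P})$.
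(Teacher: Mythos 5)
Your setup (constancy of the $J_C$-component, the pulled-back universal extension, the two discriminant formulas) matches the paper's, but the entire argument rests on the ``crux identity'' $\deg\phi^{*}\Theta=\deg g^{*}\Theta'+1$, and this is precisely where there is a genuine gap: you never prove it, you only say that from GRR ``one expects'' it, and in fact it is not a formal consequence of the formulas you write down. Push your own Chern-class bookkeeping one line further: writing $c_{1}(\mathcal{W})=c+\beta h$ with $c=f^{*}[\mathrm{pt}_{C}]$, $h=\pi^{*}[\mathrm{pt}_{\mathbb{P}^{1}}]$, your identities $c_{2}(E)=c_{2}(\mathcal{W})+h\cdot c_{1}(\mathcal{W})$ and $c_{1}(E)=c_{1}(\mathcal{W})+h$ give $\deg\phi^{*}\Theta=3c_{2}(\mathcal{W})-c_{1}(\mathcal{W})^{2}+1$, while $\deg g^{*}\Theta'+1=2c_{2}(\mathcal{W})-\tfrac{1}{2}c_{1}(\mathcal{W})^{2}+1$. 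So the crux identity holds if and only if $\mathrm{ch}_{2}(\mathcal{W})=\tfrac{1}{2}c_{1}(\mathcal{W})^{2}-c_{2}(\mathcal{W})=0$ for the particular representative $\mathcal{W}=(\mathrm{Id}_{C}\times g)^{*}\mathcal{V}$. This vanishing is not an invariant of the map $g$: replacing $\mathcal{V}$ by $\mathcal{V}\otimes p_{U}^{*}N$ changes it by $\deg g^{*}N$; equivalently, $\mathrm{ch}_{2}(\mathcal{W})=\deg g^{*}\gamma$ where $\gamma\in H^{2}(U_{C}(2,1),\mathbb{Q})$ is the K\"unneth component of $\mathrm{ch}_{2}(\mathcal{V})$ along the fundamental class of $C$. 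You do flag this normalization issue, but flagging is not resolving: without proving $\mathrm{ch}_{2}(\mathcal{W})=0$ for every line of $\mathcal{P}$ not contained in a fiber, the forward implication fails --- a priori such a line could map to a conic in $SU_{C}(2,\mathcal{L}')$ ($\deg g^{*}\Theta'=2$) with $\deg g^{*}\gamma=1$, and $\Phi|_{\ell}$ would still have $\Theta$-degree $2$. Note also that your proposed consistency check is vacuous on exactly this point: curves inside a fiber of $q$ have constant $g$, hence are insensitive to the $q^{*}\mathrm{Pic}(\mathcal{R})$-component of $\Phi^{*}\Theta$; they confirm only the (easy) coefficient $1$ of $\mathcal{O}_{\mathcal{P}}(1)$, which already follows from Proposition \ref{prop:3.4} or Theorem \ref{th:3.9}, and say nothing about whether the remaining component is $p_{2}^{*}\Theta'$ on the nose.

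The missing vanishing is, in substance, the content of the proposition, and the paper supplies it by a different, extension-theoretic route that avoids computing $\Phi^{*}\Theta$ in $\mathrm{Pic}(\mathcal{P})$ altogether. In the forward direction the paper invokes the structure theory of Section 2 (conditions (A) and (B)), rules out (A) because it would force $\ell$ into a fiber, kills the composite $f^{*}\mathcal{L}_{1}\otimes\pi^{*}\mathcal{O}_{\mathbb{P}^{1}}(1)\to E\to f^{*}V_{2}$ by a K\"unneth argument, and then applies the snake lemma to conclude that the quotient $E'$ in (\ref{eq:6}) itself sits in an extension $0\to f^{*}(V_{1}/\mathcal{L}_{1})\otimes\pi^{*}\mathcal{O}_{\mathbb{P}^{1}}(1)\to E'\to f^{*}V_{2}\to 0$; this exhibits $p_{2}\circ q(\ell)$ as a line and, in passing, proves exactly the vanishing you need (for such an $E'$ one has $c_{2}=1=\tfrac{1}{2}c_{1}^{2}$). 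Conversely, it uses the classification of line-defining bundles in $SU_{C}(2,\mathcal{L}')$ from \cite{Sun05}, \cite{MokSun09} to obtain (\ref{eq:7}) and only then computes the discriminant. So your plan is salvageable in principle, but completing it means either carrying out the full GRR-plus-normalization computation of $\Phi^{*}\Theta$ in $\mathrm{Pic}(\mathcal{P})$ that you defer, or falling back on the sequence comparison --- which is the paper's proof.
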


\begin{proof}
 Let $p_{1}: \mathcal{R}\rightarrow J_{C}$ be the projection. Since $J_{C}$ is an abelian variety, $p_{1}\circ q: \mathbb{P}^{1}\rightarrow J_{C}$ is a constant morphism and let $p_{1}\circ q(\mathbb{P}^{1})=[\mathcal{L}_{1}]\in J_{C} $. Then $p_{2}\circ q|_{ \mathbb{P}^{1}}: \mathbb{P}^{1}\rightarrow U_{C}(2,1)$ factors through $SU_{C}(2, \mathcal{L}\otimes \mathcal{L}_{1}^{-1})$ and it is defined by a vector bundle $E'=(id_{C}\times p_{2}\circ q\circ \alpha)^{*}\mathcal{V}$ on $C\times \mathbb{P}^{1}$, where $\alpha: \mathbb{P}^{1}\rightarrow \mathcal{P}$ denotes the immersion. Thus $\Phi|_{\mathbb{P}^{1}}: \mathbb{P}^{1}\longrightarrow M$ is a rational curve defined by
$$(id_{C}\times \alpha)^{*}(0\longrightarrow (id_{C}\times q)^{*}\mathfrak{L}\bigotimes \pi^{*}\mathcal{O}_{\mathcal{P}}(1)\longrightarrow \mathcal{E}\longrightarrow (id_{C}\times q)^{*}\mathcal{V}\longrightarrow 0 ),$$
which is isomorphic to
\begin{equation}
0\longrightarrow f^{*}\mathcal{L}_{1}\bigotimes \pi^{*}\mathcal{O}_{\mathbb{P}^{1}}(1)\longrightarrow (id_{C}\times \alpha)^{*}\mathcal{E}:=E\longrightarrow E'\longrightarrow 0.
\label{eq:6}
\end{equation}

If $\Phi|_{\mathbb{P}^{1}}: \mathbb{P}^{1}\longrightarrow M$ is a small rational curve, we have known that $E$ satisfies either condition (A) or condition (B). If $E$ satisfy (A), then $\mathbb{P}^{1}\subseteq \mathcal{P}$ must be in a fiber of $q$. Thus $E$ satisfies (B), and then $E$ fits in an exact sequence
   $$0\longrightarrow f^{*}V_{1}\bigotimes \pi^{*}\mathcal{O}_{\mathbb{P}^{1}}(1)\longrightarrow E\longrightarrow f^{*}V_{2}\longrightarrow 0,$$
where $V_{1}, V_{2}$ are vector bundles of rank 2, 1 and degrees 0, 1 respectively.

Since $H^{0}(\mathbb{P}^{1}, \mathcal{O}_{\mathbb{P}^{1}}(-1))=0$, and by K\"{u}nneth formula, we have
$$H^{0}(C\times\mathbb{P}^{1}, f^{*}(\mathcal{L}_{1}^{-1}\otimes V_{2})\otimes \pi^{*}\mathcal{O}_{\mathbb{P}^{1}}(-1))=H^{0}(C, \mathcal{L}_{1}^{-1}\otimes V_{2})\otimes H^{0}(\mathbb{P}^{1}, \mathcal{O}_{\mathbb{P}^{1}}(-1)) = 0.$$
Thus
$$\text{Hom}(f^{*}\mathcal{L}_{1}\bigotimes \pi^{*}\mathcal{O}_{\mathbb{P}^{1}}(1), f^{*}V_{2}) =H^{0}(C\times\mathbb{P}^{1}, f^{*}(\mathcal{L}_{1}^{-1}\otimes V_{2})\otimes \pi^{*}\mathcal{O}_{\mathbb{P}^{1}}(-1))=0.$$
 Then there's an induced injective morphism $f^{*}\mathcal{L}_{1}\otimes \pi^{*}\mathcal{O}_{\mathbb{P}^{1}}(1)\rightarrow f^{*}V_{1}\otimes \pi^{*}\mathcal{O}_{\mathbb{P}^{1}}(1)$ and a morphism $\beta: E'\rightarrow f^{*}V_{2}$ satisfying a commutative diagram
\[\begin{CD}
0@>>>f^{*}\mathcal{L}_{1}\bigotimes \pi^{*}\mathcal{O}_{\mathbb{P}^{1}}(1)@>>>E@>>>E'@>>>0\\
@.@VVV@|@VV\beta V\\
0@>>>f^{*}V_{1}\bigotimes \pi^{*}\mathcal{O}_{\mathbb{P}^{1}}(1)@>>>E@>>>f^{*}V_{2}@>>>0.
\end{CD}\]
By the snake lemma, $\beta$ is surjective and $\text{Ker}(\beta)=f^{*}(V_{1}/\mathcal{L}_{1})\otimes\pi^{*}\mathcal{O}_{\mathbb{P}^{1}}(1)$. This implies that $p_{2}\circ q|_{ \mathbb{P}^{1}}: \mathbb{P}^{1}\rightarrow SU_{C}(2,\mathcal{L}\otimes \mathcal{L}_{1}^{-1})$ is a line in $SU_{C}(2,\mathcal{L}\otimes \mathcal{L}_{1}^{-1})$.

Conversely, if $p_{2}\circ q(\mathbb{P}^{1})$ is a line in $SU_{C}(2,\mathcal{L}\otimes \mathcal{L}_{1}^{-1})$, then by the result of lines in \cite{Sun05} and \cite{MokSun09}, there are line bundles $\mathcal{L}_{2}$, $\mathcal{L}_{3}$ on $C$, and an exact sequence
\begin{equation}
0\longrightarrow f^{*}\mathcal{L}_{2}\otimes \pi^{*}\mathcal{O}_{\mathbb{P}^{1}}(1)\longrightarrow E'\longrightarrow f^{*}\mathcal{L}_{3}\longrightarrow 0
\label{eq:7}
\end{equation}
on $C\times \mathbb{P}^{1}$ with $\mathcal{L}_{1}\otimes\mathcal{L}_{2}\otimes\mathcal{L}_{3}=\mathcal{L}$.
Then by (\ref{eq:6}) and (\ref{eq:7}), we have
 $c_{2}(E)=2,\ c_{1}(E)^{2}=4$ and $\Phi(\mathbb{P}^{1})$ is a small rational curve in $M=SU_{C}(3,\mathcal{L})$ of degree $2$ with respect to $\Theta$.
\end{proof}

Note that the a small rational curve in Proposition \ref{prop:3.4} is defined by a vector bundle satisfying (A), and a small rational curve in Proposition \ref{prop:3.5} is defined by a vector bundle satisfying (B) but $V_{1}$ is not stable. Now we will construct the small rational curves which are defined by vector bundles satisfying (B) and at the same time $V_{1}$ is stable.
Unlike the above way to obtain small rational curves, there is no universal  family of vector bundles on $C$ parametrised by $SU_{C}(2,0)$. Now we use the Hecke  transformation (\cite{NarasimhanRamanan75}, \cite{NarasimhanRamanan78}) to construct a family of stable vector bundles parametrised by a scheme which maps onto $SU_{C}(2,0)$.

Let $\mathcal{V}\longrightarrow C\times U_{C}(2,1)$ be a universal family of vector bundles of rank 2 and degree 1, and $\pi_{\mathcal{V}}: \mathbb{P}(\mathcal{V}^{\vee})\longrightarrow C\times  U_{C}(2,1)$ be the associated projective bundle. Then we can construct a family $K(\mathcal{V})$ of vector bundles of rank 2 and degree 0 on $C$ parametrised by $\mathbb{P}(\mathcal{V}^{\vee})$ fits in an exact sequence of sheaves on $C\times \mathbb{P}(\mathcal{V}^{\vee})$
$$0\longrightarrow K(\mathcal{V})^{\vee}\longrightarrow p_{\mathbb{P}(\mathcal{V}^{\vee})}^{*}\pi_{\mathcal{V}}^{*}\mathcal{V}\longrightarrow p_{\mathbb{P}(\mathcal{V}^{\vee})}^{*}\tau\bigotimes\mathcal{O}_{\mathbb{P}(\mathcal{V}^{\vee})}\longrightarrow 0,$$
where $\tau$ is the tautological bundle on $\mathbb{P}(\mathcal{V}^{\vee})$, and $\mathbb{P}(\mathcal{V}^{\vee})$ is considered as a divisor in $C\times \mathbb{P}(\mathcal{V}^{\vee})$ by the inclusion $(p_{C}\circ\pi_{\mathcal{V}}, Id_{\mathbb{P}(\mathcal{V}^{\vee})})$.

To define $K(\mathcal{V})$ we now construct a surjective homomorphism $\beta_{\mathcal{V}}:p_{\mathbb{P}(\mathcal{V}^{\vee})}^{*}\pi_{\mathcal{V}}^{*}\mathcal{V}\longrightarrow p_{\mathbb{P}(\mathcal{V}^{\vee})}^{*}\tau\otimes\mathcal{O}_{\mathbb{P}(\mathcal{V}^{\vee})}$
or, what is the same, an element of
$$H^{0}(C\times \mathbb{P}(\mathcal{V}^{\vee}), Hom(p_{\mathbb{P}(\mathcal{V}^{\vee})}^{*}\pi_{\mathcal{V}}^{*}\mathcal{V}, p_{\mathbb{P}(\mathcal{V}^{\vee})}^{*}\tau\otimes\mathcal{O}_{\mathbb{P}(\mathcal{V}^{\vee})})),$$
which is mapped isomorphically by $(p_{C}\circ\pi_{\mathcal{V}}, Id_{\mathbb{P}(\mathcal{V}^{\vee})})^{*}$ to
\begin{eqnarray*}
&&H^{0}(\mathbb{P}(\mathcal{V}^{\vee}), (p_{C}\circ\pi_{\mathcal{V}}, Id_{\mathbb{P}(\mathcal{V}^{\vee})})^{*} Hom(p_{\mathbb{P}(\mathcal{V}^{\vee})}^{*}\pi_{\mathcal{V}}^{*}\mathcal{V}, p_{\mathbb{P}(\mathcal{V}^{\vee})}^{*}\tau\otimes\mathcal{O}_{\mathbb{P}(\mathcal{V}^{\vee})}))\\
 &\approx& H^{0}(\mathbb{P}(\mathcal{V}^{\vee}), Hom((p_{C}\circ\pi_{\mathcal{V}}, Id_{\mathbb{P}(\mathcal{V}^{\vee})})^{*}p_{\mathbb{P}(\mathcal{V}^{\vee})}^{*}\pi_{\mathcal{V}}^{*}\mathcal{V}, (p_{C}\circ\pi_{\mathcal{V}}, Id_{\mathbb{P}(\mathcal{V}^{\vee})})^{*}(p_{\mathbb{P}(\mathcal{V}^{\vee})}^{*}\tau\otimes\mathcal{O}_{\mathbb{P}(\mathcal{V}^{\vee})})) )\\
 &\approx& H^{0}(\mathbb{P}(\mathcal{V}^{\vee}), Hom(\pi_{\mathcal{V}}^{*}\mathcal{V}, \tau)).
  \end{eqnarray*}

For the associated projective bundle $\pi_{\mathcal{V}}: \mathbb{P}(\mathcal{V}^{\vee})\longrightarrow C\times  U_{C}(2,1)$, there is a canonical surjective homomorphism $\alpha_{\mathcal{V}}: \pi_{\mathcal{V}}^{*}\mathcal{V}\longrightarrow \tau$. We define $\beta_{\mathcal{V}}$ by setting $(p_{C}\circ\pi_{\mathcal{V}}, Id_{\mathbb{P}(\mathcal{V}^{\vee})})^{*}\beta_{\mathcal{V}}=\alpha_{\mathcal{V}}$. It is easy to see that the homomorphism $\beta_{\mathcal{V}}$ thus defined is surjective, and the kernel of this homomorphism is locally free, which defines a vector bundle $H(\mathcal{V})$. Let $K(\mathcal{V})$ be its dual. Then $K(\mathcal{V})$ is a family of semi-stable vector bundles of rank 2 and degree 0 on $C$, and $K(\mathcal{V})$ induces a surjective morphism
$$\theta: \mathbb{P}(\mathcal{V}^{\vee})\longrightarrow U_{C}(2, 0).$$

Let $\mathbb{P}(\mathcal{V}^{\vee})^{s}:=\theta^{-1}(SU_{C}(2,0))$, where $SU_{C}(2,0)$ denotes the open set of stable bundles in $U_{C}(2,0)$. Then $K(\mathcal{V})|_{C\times \mathbb{P}(\mathcal{V}^{\vee})^{s}}$ is a family of stable vector bundles parametrised by $\mathbb{P}(\mathcal{V}^{\vee})^{s}$.

Now consider the morphism
\[\begin{CD}
g: SU_{C}(2,0)\times J_{C}^{1}@> \text{det}(\cdot)\times(\cdot)>>J_{C}\times J_{C}^{1}
@>(\cdot)\otimes(\cdot)>>J_{C}^{1}.
\end{CD} \]
Let $R'$ be its fiber of  at $[\mathcal{L}]\in J_{C}^{1}$ and $T$ be the fiber of $(\theta|_{\mathbb{P}(\mathcal{V}^{\vee})^{s}}\times Id_{J_{C}^{1}})\circ g$ at $[\mathcal{L}]\in J_{C}^{1}$, then $T=(\mathbb{P}(\mathcal{V}^{\vee})^{s}\times J_{C}^{1})\times_{SU_{C}(2,0)\times J_{C}^{-1}}R'$. We still use $K(\mathcal{V})$, $\mathfrak{L}^{1}$ to denote the pullback on $C\times T$ by the projections $C\times T\longrightarrow C\times \mathbb{P}(\mathcal{V}^{\vee})$ and $C\times T\longrightarrow C\times J_{C}^{1}$ respectively, where $\mathfrak{L}^{1}$ is a Poincare bundle on $C\times J_{C}^{1}$. Let $p: C\times T\longrightarrow T$ and $\mathcal{F}=R^{1}p_{*}(\mathfrak{L}^{1\vee}\otimes K(\mathcal{V}))$. Then $\mathcal{F}$ is a vector bundle of rank $2g$. Let $q': \mathcal{P}':=\mathbb{P}(\mathcal{F})\longrightarrow T$ be the projective bundle parametrizing 1-dimensional subspaces of $\mathcal{F}_{t}\ (t\in T)$ and $f: C\times \mathcal{P}'\longrightarrow  C$ and $\pi: C\times \mathcal{P}'\longrightarrow \mathcal{P}'$ be the projections. Then there is a universal extension
\begin{equation}
0\longrightarrow (Id_{C}\times q')^{*}K(\mathcal{V})\otimes \pi^{*}\mathcal{O}_{\mathcal{P}'}(1)\longrightarrow \mathcal{E'}\longrightarrow (Id_{C}\times q')^{*}\mathfrak{L}^{1}\longrightarrow 0
\label{eq:5}
\end{equation}
on $C\times \mathcal{P}'$ such that for any point $x=([0\rightarrow V_{1}^{\vee}\rightarrow \mathcal{V}\rightarrow \mathcal{O}_{p}\rightarrow 0], [\xi],[\iota])\in \mathcal{P}'$ with $\text{det}(V_{1})\otimes \xi =\mathcal{L}$, $0\rightarrow V_{1}^{\vee}\rightarrow \mathcal{V}\rightarrow \mathcal{O}_{p}\rightarrow 0$ represents a point in $\mathbb{P}(\mathcal{V}^{\vee})$ and $[\iota]\subset H^{1}(C, \xi^{\vee}\otimes V_{1})$ being a line through the origin, the bundle $\mathcal{E'}|_{C\times \{x\}}$ is the isomorphic class of vector bundle $E'$ given by extensions
$$0\longrightarrow V_{1}\longrightarrow E' \longrightarrow \xi \longrightarrow 0$$ that defined by vectors on the line $[\iota]\subset H^{1}(C, \xi^{\vee}\otimes V_{1})$.

\begin{Lemma}
Let $V_{1},V_{2}$ are vector bundles of ranks $r_{1}$, $r_{2}$ and degrees 0, 1 respectively. Let $0\longrightarrow V_{1}\longrightarrow V\longrightarrow V_{2}\longrightarrow 0$ be a non-trivial extension.

(i) If $V_{1}$ and $V_{2}$ are stable, then $V$ is stable;

(ii) if $V$ is stable,  then $V_{1}$  and $V_{2}$ are semistable.
\end{Lemma}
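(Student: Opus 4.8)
The plan is to prove both parts by elementary slope comparison, using only the numerics $\mu(V_1)=0$, $\mu(V_2)=\frac{1}{r_2}$ and $\mu(V)=\frac{1}{r_1+r_2}$, which satisfy $0<\frac{1}{r_1+r_2}<\frac{1}{r_2}$. The only non-formal ingredient is the role of the non-triviality hypothesis in (i), which I isolate below; everything else is bookkeeping with integer degrees.

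For part (i), let $F\subset V$ be a proper nonzero subsheaf; I must show $\mu(F)<\mu(V)$. I would set $F_1=F\cap V_1$ and let $F_2$ be the image of $F$ in $V_2$, so that $0\to F_1\to F\to F_2\to 0$ is exact with $F_1\subseteq V_1$ and $F_2\subseteq V_2$. The strategy is to bound $\deg F=\deg F_1+\deg F_2$ from above, case by case according to whether each of $F_1,F_2$ is zero, proper nonzero, or the whole bundle. Passing to saturations (which only raises the degree) and invoking the stability of $V_1$ and $V_2$ together with integrality of the degree, I get: $\deg F_1\le -1$ when $F_1$ is proper nonzero (stability of $V_1$ forces $\mu<0$) and $\deg F_1\le 0$ otherwise; and $\deg F_2\le 0$ when $F_2$ is proper nonzero (stability of $V_2$ forces $\mu<\frac1{r_2}$, hence $\deg F_2<\frac{\text{rk}(F_2)}{r_2}<1$), with $\deg F_2=1$ occurring only when $F_2=V_2$. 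Running through the combinations, every case yields $\deg F\le 0$, hence $\mu(F)\le 0<\mu(V)$, except for two: $F=V$, excluded because $F$ is proper; and $F_1=0,\ F_2=V_2$, in which case $F\to V_2$ is an isomorphism, i.e. $F$ is the image of a section of $V\to V_2$, which would split the sequence. It is exactly here that the non-triviality of the extension is used to discard this last case, and stability of $V$ follows.

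For part (ii), I would argue by contradiction in each direction, using that subsheaves of $V_1$ and quotients of $V_2$ are respectively subsheaves and quotients of $V$. If $V_1$ were not semistable, there would be a subsheaf $G\subset V_1$ with $\mu(G)>0$, hence $\deg G\ge 1$ and $\mu(G)\ge\frac{1}{\text{rk}(G)}\ge\frac{1}{r_1}>\frac{1}{r_1+r_2}=\mu(V)$; viewing $G\subset V$ contradicts the stability of $V$. If $V_2$ were not semistable, there would be a quotient $V_2\twoheadrightarrow Q$ of rank $s<r_2$ with $\mu(Q)<\frac1{r_2}$, forcing $\deg Q<\frac{s}{r_2}<1$, i.e. $\deg Q\le 0$ and $\mu(Q)\le 0$; since $Q$ is then a proper quotient of $V$ with $\mu(Q)\le 0<\mu(V)$, this again contradicts stability of $V$. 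Hence both $V_1$ and $V_2$ are semistable.

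The only genuinely delicate point I expect is the bookkeeping in (i): one must check that the non-triviality hypothesis is used precisely, and only, to rule out the potential destabilizing section $F_1=0,\ F_2=V_2$, and that in all remaining cases the integer degree bounds combine to give $\deg F\le 0$. The arithmetic in (ii) is routine once one remembers to treat $V_1$ through subsheaves and $V_2$ through quotients, so that the destabilizing object is transported to $V$ with the correct variance.
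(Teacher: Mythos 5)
Your proof is correct and follows essentially the same route as the paper: the same decomposition $0\to F\cap V_1\to F\to F_2\to 0$ of a candidate destabilizing subsheaf, and the same integer-degree bookkeeping against $\mu(V)=\tfrac{1}{r_1+r_2}$. In fact your write-up is tighter than the paper's: in the case $V'_2=V_2$ the paper asserts $\deg(V'_1)<0$, silently skipping the subcase $V'_1=0$ --- which is exactly the one ruled out by non-triviality of the extension, as you point out --- and the paper dismisses part (ii) with ``it's easy to check,'' whereas you carry out the argument (transporting destabilizing subsheaves of $V_1$ and destabilizing quotients of $V_2$ to $V$ with the correct variance).
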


\begin{proof}
 (i) Let $V'\subset V$ be a proper subbundle and $V'_{2}\subset V_{2}$ be its image. Then we have a subbundle $V'_{1}\subset V_{1}$ such that $V'$ fits in an exact sequence
 $$0\longrightarrow V'_{1}\longrightarrow V'\longrightarrow V'_{2}\longrightarrow 0.$$

If $V'_{2}\neq V_{2}$, then $\text{deg}(V'_{2})\leq 0$, and we always have $\text{deg}(V'_{1})\leq0$. Hence $\mu (V')=\frac{\text{deg}(V'_{1})+\text{deg}(V'_{2})}{\text{rk}(V')}\leq 0<\mu(V)$. Thus we assume that $V'_{2}=V_{2}$, then $V'_{1}\neq V_{1}$ and $\text{deg}(V'_{1})<0$. Hence, $\mu(V')=\frac{\text{deg}(V'_{1})+1}{\text{rk}(V')}\leq 0<\mu(V)$.

 (ii) It's easy to check.
 \end{proof}

By the lemma above, $\mathcal{E'}$ is a family of stable vector bundles of rank 3 and with fixed determinant $\mathcal{L}$ on $C$ parametrised by $\mathcal{P}'$. Then $\mathcal{E'}$ defines a morphism
$$\Psi: \mathcal{P}':=\mathbb{P}(\mathcal{F})\longrightarrow  SU_{C}(3, \mathcal{L})=M.$$

\begin{Proposition}
 For any line $\mathbb{P}^{1}\subset \mathcal{P}'$ in the fiber of $q'$,  its image in $M$ is a small rational curve.
 \label{prop:3.7}
\end{Proposition}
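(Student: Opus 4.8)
The plan is to follow verbatim the strategy of the proof of Proposition \ref{prop:3.4}, reducing the statement to a single Chern class computation through the universal extension (\ref{eq:5}). Let $\alpha:\mathbb{P}^{1}\to\mathcal{P}'$ denote the inclusion of the given line. Because $\mathbb{P}^{1}$ is a line in $\mathcal{P}'$ we have $\alpha^{*}\mathcal{O}_{\mathcal{P}'}(1)\cong\mathcal{O}_{\mathbb{P}^{1}}(1)$, and because $\mathbb{P}^{1}$ lies in a single fiber of $q'$ the base point $t=([0\to V_{1}^{\vee}\to\mathcal{V}\to\mathcal{O}_{p}\to0],[\xi])\in T$ is fixed, so the rank $2$, degree $0$ bundle $V_{1}$ and the degree $1$ line bundle $\xi$ do not vary along $\mathbb{P}^{1}$.

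First I would pull back (\ref{eq:5}) along $Id_{C}\times\alpha$. Since $(Id_{C}\times q')\circ(Id_{C}\times\alpha)$ collapses $\mathbb{P}^{1}$ to $t$, the two outer terms restrict to $f^{*}V_{1}\otimes\pi^{*}\mathcal{O}_{\mathbb{P}^{1}}(1)$ and $f^{*}\xi$ respectively, yielding on $C\times\mathbb{P}^{1}$ the exact sequence
$$0\longrightarrow f^{*}V_{1}\otimes\pi^{*}\mathcal{O}_{\mathbb{P}^{1}}(1)\longrightarrow E\longrightarrow f^{*}\xi\longrightarrow 0, \qquad E:=(Id_{C}\times\alpha)^{*}\mathcal{E}',$$
which is exactly a bundle of type (B). Note that the twist $\mathcal{O}_{\mathbb{P}^{1}}(1)$ sits on the rank $2$ subsheaf, precisely because $\mathcal{P}'=\mathbb{P}(\mathcal{F})$ parametrizes classes in $\mathrm{Ext}^{1}(\xi,V_{1})$ with $V_{1}$ the rank $2$ bundle. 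This is the single place where one must verify that a line in the fiber of $q'$ produces type (B) rather than type (A).

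Next I would compute the Chern classes of $E$ on $X=C\times\mathbb{P}^{1}$. Writing $h=\pi^{*}[\mathrm{pt}]$ and $\gamma=f^{*}[\mathrm{pt}]$, so that $h^{2}=\gamma^{2}=0$ and $h\gamma=[\mathrm{pt}]$, one has $c_{1}(f^{*}V_{1})=0$ and $c_{2}(f^{*}V_{1})=f^{*}c_{2}(V_{1})=0$. Hence from the sequence $c_{1}(E)=2h+\gamma$, and therefore $c_{2}(E)=2$ and $c_{1}(E)^{2}=4$. Feeding these into the degree formula of Proposition \ref{prop:3.4} gives
$$\text{deg}\,\Psi^{*}(\Theta)|_{\mathbb{P}^{1}}=\tfrac{1}{2}\Delta(E)=3\,c_{2}(E)-\tfrac{1}{2}(3-1)c_{1}(E)^{2}=6-4=2.$$
Since $\mathcal{E}'$ is already a family of stable bundles of rank $3$ with determinant $\mathcal{L}$ (by the preceding lemma and the discussion before the proposition), $E$ is such a family as well, so $\Psi|_{\mathbb{P}^{1}}$ is a genuine morphism $\mathbb{P}^{1}\to M$; the positivity of its $\Theta$-degree shows it is non-constant, so its image is a rational curve, and $2<3=r$ makes it small.

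I do not expect a serious obstacle, as the argument runs parallel to Propositions \ref{prop:3.4} and \ref{prop:3.5}. The only point demanding care is the identification of the pulled-back extension as type (B) with the twist on the rank $2$ factor, so that the Chern numbers come out as $c_{2}(E)=2$ and $c_{1}(E)^{2}=4$ rather than the values produced by a fiber of $q$; this is guaranteed by the construction of $\mathcal{P}'=\mathbb{P}(\mathcal{F})$ over $T$ via the Hecke transformation.
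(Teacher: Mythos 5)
Your proof is correct and follows essentially the same route as the paper: restrict the universal extension (\ref{eq:5}) to $C\times\mathbb{P}^{1}$ to get a type (B) sequence $0\to f^{*}V_{1}\otimes\pi^{*}\mathcal{O}_{\mathbb{P}^{1}}(1)\to E\to f^{*}\xi\to 0$, compute $c_{2}(E)=2$, $c_{1}(E)^{2}=4$, and conclude $\deg\Psi^{*}(\Theta)|_{\mathbb{P}^{1}}=\tfrac{1}{2}\Delta(E)=2<3$. Your explicit Chern class bookkeeping and the remark on stability of $\mathcal{E}'$ only make explicit what the paper leaves implicit.
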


 \begin{proof}
 Let $q'(\mathbb{P}^{1})=([0\rightarrow V_{1}^{\vee}\rightarrow \mathcal{V}\rightarrow \mathcal{O}_{p}\rightarrow 0], [\xi])\in T$ and $E'=\mathcal{E'}|_{C\times \mathbb{P}^{1}}$. Then the morphism
$$\Psi|_{\mathbb{P}^{1}}: \mathbb{P}^{1}\longrightarrow M$$
is defined by $E'$, which fits in an exact sequence
$$0\longrightarrow f^{*}V_{1}\otimes \pi^{*}\mathcal{O}_{\mathbb{P}^{1}}(1)\longrightarrow E'\longrightarrow f^{*}\xi\longrightarrow 0.$$
Thus $c_{2}(E')=2$ and $c_{1}(E')^{2}=4$. Then the degree of $\Psi|_{\mathbb{P}^{1}}$ is equal to
$$\text{deg}\Psi^{*}(\Theta)|_{\mathbb{P}^{1}}=\frac{1}{2}\bigtriangleup(E')=3c_{2}(E')-c_{1}(E')^{2}=2,$$
which implies that $\Psi(\mathbb{P}^{1})$ is a small rational curve in $M=SU_{C}(3,\mathcal{L})$ of $\Theta$-degree 2 and $\Psi|_{\mathbb{P}^{1}}$ is its normalization.
\end{proof}

\begin{Theorem}
There exist small rational curves on the moduli space $M$. Moreover, any rational curve obtained by the following ways are small rational curves and  any small rational curve $\phi: \mathbb{P}^{1}\longrightarrow M$  can be obtained by one of the following ways:

(i) it's the image (under $\Phi$) of a rational curve of degree 2 in $\mathcal{P}$ in the fiber of $q$;

(ii) it's the image (under $\Phi$) of a double cover of a line in $\mathcal{P}$ in the fiber of $q$;

(iii) it's the image (under $\Phi$) of a line in $\mathcal{P}$ which is not in a fiber of $q$ and maps to a line in $U_{C}(2,\mathcal{L'})$ for some line bundle $\mathcal{L'}$ of degree 1;

(iv) it's the image (under $\Psi$) of a line in $\mathcal{P}'$ in the fiber of $q'$.
\label{th:3.8}
\end{Theorem}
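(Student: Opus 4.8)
The statement combines two opposite directions, and the forward one is already in hand: the assertions that constructions (i)--(iv) produce small rational curves are precisely Propositions \ref{prop:3.4}, \ref{prop:3.5} and \ref{prop:3.7}. For existence one only needs to observe that the relevant parameter spaces carry the requisite curves: for $g\geq 2$ each fibre $q^{-1}([\xi],[V])=\mathbb{P}(\mathrm{Ext}^1(V,\xi))$ has dimension $2g-2\geq 2$, hence contains smooth conics and lines yielding types (i),(ii), while nonemptiness of $T$ together with the existence of lines in $SU_C(2,\mathcal{L}')$ (from \cite{Sun05},\cite{MokSun09}) yields types (iii),(iv). The real content is therefore the converse, and the plan is as follows. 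Starting from a small rational curve $\phi:\mathbb{P}^1\to M$, Section~2 shows it is defined by a bundle $E$ on $C\times\mathbb{P}^1$ of type (A) or (B), and Lemma \ref{lm:claims} fixes the data on $C$: in (A), $V_1=\xi$ is a degree $0$ line bundle and $V_2=V$ is stable of rank $2$ and degree $1$ with $\xi\otimes\det V\cong\mathcal{L}$; in (B), $V_1$ is semistable of rank $2$ degree $0$ and $V_2=\xi$ has degree $1$ with $\det V_1\otimes\xi\cong\mathcal{L}$. Since the relative Harder--Narasimhan filtration is canonical, these data are intrinsic to $\phi$, so the three cases (A); (B) with $V_1$ stable; (B) with $V_1$ strictly semistable are mutually exclusive, and it suffices to match them with (i)--(ii), (iv), and (iii).

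For case (A) the key input is a Künneth computation: since $\mathrm{Hom}(V,\xi)=0$ (slopes $\tfrac12>0$ with $V$ stable) and $H^1(\mathbb{P}^1,\mathcal{O}(2))=0$, one gets a canonical isomorphism $\mathrm{Ext}^1(f^*V,f^*\xi\otimes\pi^*\mathcal{O}_{\mathbb{P}^1}(2))\cong \mathrm{Ext}^1(V,\xi)\otimes H^0(\mathbb{P}^1,\mathcal{O}(2))$, where $\mathrm{Ext}^1(V,\xi)$ is exactly the fibre $\mathcal{G}_{([\xi],[V])}$ of $\mathcal{G}$. Thus the extension class of $E$ is a nonzero map $s:\mathcal{O}_{\mathbb{P}^1}(-2)\to\mathcal{G}_{([\xi],[V])}\otimes\mathcal{O}_{\mathbb{P}^1}$. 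I would show $s$ is a subbundle inclusion: were $s(t)=0$ for some $t$, the restriction $E|_{C\times\{t\}}$ would be the split extension $\xi\oplus V$, which is unstable (the subbundle $V$ has slope $\tfrac12>\tfrac13$), contradicting that $\phi$ maps into $M$. Hence $s$ is nowhere zero and defines $\alpha:\mathbb{P}^1\to q^{-1}([\xi],[V])$ with $\alpha^*\mathcal{O}_{\mathcal{P}}(1)\cong\mathcal{O}_{\mathbb{P}^1}(2)$ and $\Phi\circ\alpha=\phi$ (the pullback of the universal extension \eqref{eq:4} recovers $E$ up to $\pi^*$ of a line bundle); the lemma classifying such $\alpha$ then splits the case into (i) or (ii).

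In case (B) the same computation, using $\mathrm{Hom}(\xi,V_1)=0$ (slope $1>0$ against semistable $V_1$) and $H^1(\mathbb{P}^1,\mathcal{O}(1))=0$, gives $\mathrm{Ext}^1(f^*\xi,f^*V_1\otimes\pi^*\mathcal{O}(1))\cong\mathrm{Ext}^1(\xi,V_1)\otimes H^0(\mathbb{P}^1,\mathcal{O}(1))$, so the class of $E$ yields, by the identical non-vanishing argument, a line in $\mathbb{P}(\mathrm{Ext}^1(\xi,V_1))$. If $V_1$ is stable, then $([V_1],[\xi])\in T$ — here one uses that the Hecke construction $\theta$ surjects onto $SU_C(2,0)$, so every stable $V_1$ is realized — and this line lies in a fibre of $q'$, giving (iv) through $\Psi$ via \eqref{eq:5}. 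If $V_1$ is strictly semistable, pick a degree $0$ sub-line-bundle $\mathcal{L}_1\subset V_1$; it induces $f^*\mathcal{L}_1\otimes\pi^*\mathcal{O}(1)\subset E$ with rank $2$ quotient $E'$ sitting in $0\to f^*(V_1/\mathcal{L}_1)\otimes\pi^*\mathcal{O}(1)\to E'\to f^*\xi\to 0$. A slope argument shows each $E'|_{C\times\{t\}}$ is stable: a destabilizing sub-line-bundle of degree $\geq 1$ would pull back to a rank $2$ subsheaf of the stable bundle $E|_{C\times\{t\}}$ of degree $\geq 1>\tfrac23$, which is impossible. Hence $E'$ defines a line in $SU_C(2,\mathcal{L}\otimes\mathcal{L}_1^{-1})$, identifying $\phi$ with the image of a line in $\mathcal{P}$ not contained in a fibre of $q$, and Proposition \ref{prop:3.5} then yields (iii).

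The main obstacle lies entirely in the converse, and within it in two delicate points. The first is verifying that the defining maps $s$ are genuine subbundle inclusions rather than merely generically injective sheaf maps; this is exactly what forces the precise degrees $\mathcal{O}_{\mathbb{P}^1}(2)$ and $\mathcal{O}_{\mathbb{P}^1}(1)$ and hence the clean dichotomy of the classifying lemma. The second is the strictly semistable subcase of (B), where one must exhibit the sub-line-bundle $\mathcal{L}_1$ and confirm that the entire quotient family $E'$ consists of stable rank $2$ bundles, so that Proposition \ref{prop:3.5} genuinely applies. Both points reduce to the stability of every restriction $E|_{C\times\{t\}}$, which is precisely the hypothesis that $\phi$ is a morphism into $M$; this is the structural fact that makes the whole classification go through.
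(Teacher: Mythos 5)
Your proposal is correct and takes essentially the same route as the paper: the forward direction invokes Propositions \ref{prop:3.4}, \ref{prop:3.5} and \ref{prop:3.7}, and the converse proceeds by the same trichotomy --- type (A) yielding (i)/(ii) via the classification of maps $\alpha$ with $\alpha^{*}\mathcal{O}_{\mathcal{P}}(1)\cong\mathcal{O}_{\mathbb{P}^{1}}(2)$, type (B) with $V_{1}$ strictly semistable yielding (iii) via the sub-line-bundle $\mathcal{L}_{1}$ and the quotient family $E'$, and type (B) with $V_{1}$ stable yielding (iv) via the Hecke parametrization and the fibers of $q'$. The only difference is one of rigor, in your favor: you make explicit the K\"{u}nneth identification of the extension class, the nowhere-vanishing of the section $s$ forced by stability of each $E|_{C\times\{t\}}$, and the stability of the quotient family $E'$, all of which the paper asserts implicitly.
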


\begin{proof}
By Propositions \ref{prop:3.4}, \ref{prop:3.5} and \ref{prop:3.7}, rational curves obtained from (i), (ii), (iii) and (iv) are small rational curves.

If $\phi: \mathbb{P}^{1}\longrightarrow M$ is a small rational curve, then it's defined by a vector bundle $E$ on $C\times \mathbb{P}^{1}$ satisfied either condition (A) or condition (B). If $E$ satisfy (A), then $\phi: \mathbb{P}^{1}\longrightarrow M$ is the image (under $\Phi$) of either a rational curve of degree 2 in $\mathcal{P}$ in the fiber of $q$ at $([V_{1}], [V_{2}])\in \mathcal{R}$ or a double cover of a line in $\mathcal{P}$ in the fiber of $q$ at $([V_{1}], [V_{2}])\in \mathcal{R}$.

Now we assume that $E$ satisfies (B), i.e, small rational curve $\phi: \mathbb{P}^{1}\longrightarrow M$ is defined by a vector bundle $E$ on $C\times \mathbb{P}^{1}$, which fits in an exact sequence
   $$0\longrightarrow f^{*}V_{1}\bigotimes \pi^{*}\mathcal{O}_{\mathbb{P}^{1}}(1)\longrightarrow E\longrightarrow f^{*}V_{2}\longrightarrow 0,$$
where $V_{1}, V_{2}$ are vector bundles of ranks 2, 1 and degrees 0, 1 respectively.

If $V_{1}$ is semi-stable but not stable, then there is a line sub-bundle $\mathcal{L}_{1}$ of $V_{1}$ of degree 0, and then $f^{*}\mathcal{L}_{1}\otimes\pi^{*}\mathcal{O}_{\mathbb{P}^{1}}(1)$ is a sub-bundle of $E$. Hence $\phi: \mathbb{P}^{1}\longrightarrow M$ factors as $\mathbb{P}^{1}\rightarrow \mathcal{P}\rightarrow M$ where $\mathbb{P}^{1}\rightarrow \mathcal{P}$ is a rational curve of degree 1, and $p_{1}\circ q $ maps it to a point $[\mathcal{L}_{1}]\in J_{C}$. Let $E':=E/(f^{*}\mathcal{L}_{1}\otimes\pi^{*}\mathcal{O}_{\mathbb{P}^{1}}(1))$. Then $E'$ fits in an exact sequence
   $$0\longrightarrow f^{*}\mathcal{L}_{2}\otimes\pi^{*}\mathcal{O}_{\mathbb{P}^{1}}(1)\longrightarrow E'\longrightarrow f^{*}V_{2}\longrightarrow 0$$
   with $\mathcal{L}_{1}\otimes\mathcal{L}_{2}\otimes V_{2}=\mathcal{L}$.
    This implies that $p_{2}\circ q (\mathbb{P}^{1})$ is a line in $U_{C}(2, \mathcal{L}\otimes\mathcal{L}_{1}^{-1})$.

   If $V_{1}$ is stable, then  the small rational curve $\phi: \mathbb{P}^{1}\longrightarrow M$ factors through $\mathbb{P}\text{Ext}^{1}(V_{2}, V_{1})$. For any point $t=([0\rightarrow V_{1}^{\vee}\rightarrow V\rightarrow \mathcal{O}_{p}\rightarrow 0], [V_{2}])\in \theta^{-1}([V_{1}], [V_{2}])$, the fiber $q'^{-1}(t)$ of $q'$ over $t$ is isomorphic to $\mathbb{P}\text{Ext}^{1}(V_{2}, V_{1})$. Hence the small rational curve $\phi: \mathbb{P}^{1}\longrightarrow M$ can factors through the fiber $q'^{-1}(t)$, which means that it's the image (under $\Psi$) of a line in $\mathcal{P}'$ in the fiber of $q'$ at $t\in T$.
   \end{proof}

\begin{Remark}
  When a small rational curve $\phi: \mathbb{P}^{1}\longrightarrow M$ is defined by a vector bundle $E$ on $C\times  \mathbb{P}^{1}$ satisfied condition (B) and moreover $V_{1}$ is stable, then $\phi$ can be obtained by the fourth way in the above theorem, but the way is not unique. Indeed, it can factors through every fiber of $q'$ over $ \theta^{-1}([V_{1}],[V_{2}])$.
\end{Remark}

\subsection{Remarks on lines and the locus of small rational curves}
For the lines on the moduli space $M$, we have the following result:

 \begin{Theorem}
 There exist lines on the moduli space $M$. For any line $l\subseteq M$, there is a line $\mathbb{P}^{1}\subseteq \mathcal{P}$ in the fibre of $q$ such that $\Phi(\mathbb{P}^{1})=l$.
   \label{th:3.9}
 \end{Theorem}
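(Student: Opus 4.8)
The plan is to prove both assertions by matching lines on $M$ with lines lying in the fibres of $q$, using the degree formula of Section 2 to pin down the extension data.

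First I would read off from the degree formula what a line must look like. Since the index of $M$ is $2$, a line $l\subseteq M$ satisfies $\deg(\Theta|_{l})=1$, so its normalization $\phi:\mathbb{P}^{1}\to M$ has $\deg(\phi^{*}\Theta)=1$. Formula (\ref{eq:1}) then reads $1=\sum_{i=1}^{n-1}\mathrm{rk}(E_{i})(\alpha_{i}-\alpha_{i+1})$, and since every summand is a positive integer this forces $n=2$, $\mathrm{rk}(E_{1})=1$ and $\alpha_{1}-\alpha_{2}=1$. Hence $E$ fits in an exact sequence
$$0\longrightarrow f^{*}\xi\otimes\pi^{*}\mathcal{O}_{\mathbb{P}^{1}}(1)\longrightarrow E\longrightarrow f^{*}V\longrightarrow 0,$$
where, by Lemma \ref{lm:claims}, $\xi=V_{1}$ is a degree $0$ line bundle and $V=V_{2}\in U_{C}(2,1)$ is stable; restricting to a fibre $C\times\{t\}$ gives $\det E|_{C\times\{t\}}=\xi\otimes\det V=\mathcal{L}$, so $([\xi],[V])\in\mathcal{R}$. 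Thus every line is governed by a point of $\mathcal{R}$ together with an extension class, which is precisely the data parametrised by $\mathcal{P}=\mathbb{P}(\mathcal{G})$.

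For existence I would compute the fibres of $q$. For $([\xi],[V])\in\mathcal{R}$ the bundle $V^{\vee}\otimes\xi$ is stable of slope $-\tfrac{1}{2}<0$, so $h^{0}=0$ and Riemann--Roch gives $h^{1}(C,V^{\vee}\otimes\xi)=2g-1$; hence $q^{-1}([\xi],[V])\cong\mathbb{P}\mathrm{Ext}^{1}(V,\xi)\cong\mathbb{P}^{2g-2}$. Since $g\geq 2$ this projective space has dimension $\geq 2$ and therefore contains lines. For any such line $\mathbb{P}^{1}\subseteq\mathcal{P}$ one has $\alpha^{*}\mathcal{O}_{\mathcal{P}}(1)\cong\mathcal{O}_{\mathbb{P}^{1}}(1)$, and pulling back the universal extension (\ref{eq:4}) realizes $\Phi|_{\mathbb{P}^{1}}$ as the map defined by a bundle $E$ as above with $c_{2}(E)=1$ and $c_{1}(E)^{2}=2$; the Chern class computation of Proposition \ref{prop:3.4} then yields $\deg(\Phi^{*}\Theta)|_{\mathbb{P}^{1}}=3c_{2}(E)-c_{1}(E)^{2}=1$, so the image is a line, proving the first assertion. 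For the converse I would build the classifying morphism explicitly: by Künneth (using $h^{0}(C,V^{\vee}\otimes\xi)=0$) the extension class of the sequence above lives in
$$\mathrm{Ext}^{1}_{C\times\mathbb{P}^{1}}(f^{*}V,f^{*}\xi\otimes\pi^{*}\mathcal{O}(1))\cong H^{1}(C,V^{\vee}\otimes\xi)\otimes H^{0}(\mathbb{P}^{1},\mathcal{O}(1)),$$
so it may be written $\eta=e_{0}\otimes s_{0}+e_{1}\otimes s_{1}$ with $e_{i}\in H^{1}(C,V^{\vee}\otimes\xi)$ and $s_{0},s_{1}$ a basis of $H^{0}(\mathbb{P}^{1},\mathcal{O}(1))$. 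At a point $t=[a:b]$ the restricted extension class is $ae_{0}+be_{1}$, so $t\mapsto([\xi],[V],[ae_{0}+be_{1}])$ defines a morphism $\psi:\mathbb{P}^{1}\to\mathcal{P}$ into the fibre $q^{-1}([\xi],[V])$ with $E\cong(\mathrm{Id}_{C}\times\psi)^{*}\mathcal{E}$, whence $\phi=\Phi\circ\psi$. If $e_{0},e_{1}$ were linearly dependent the map $[a:b]\mapsto[ae_{0}+be_{1}]$ would be constant, forcing $\phi$ constant and contradicting $\deg(\phi^{*}\Theta)=1$; hence $e_{0},e_{1}$ are independent, $\psi$ embeds $\mathbb{P}^{1}$ as a line in $q^{-1}([\xi],[V])$, and $\Phi(\psi(\mathbb{P}^{1}))=l$.

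The main obstacle is the converse, and specifically the identification $E\cong(\mathrm{Id}_{C}\times\psi)^{*}\mathcal{E}$: one must check that the bundle $E$ attached to $l$ is genuinely the pullback of the universal extension (\ref{eq:4}), matching the $\pi^{*}\mathcal{O}(1)$-twist and the normalization of the universal bundles on $C\times\mathcal{R}$, rather than merely agreeing fibrewise over $\mathbb{P}^{1}$. This is the universal property of $\mathbb{P}(\mathcal{G})$ applied to the family $E$, and once it is in place the linear-algebra dichotomy on the pair $(e_{0},e_{1})$ completes the argument.
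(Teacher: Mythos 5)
Your argument is correct, but it is not the route the paper takes: the paper proves Theorem \ref{th:3.9} in a single sentence, by citing Theorem 3.4 of \cite{Sun05} and Theorem 2.7 of \cite{MokSun09}, which describe the lines on $SU_{C}(r,\mathcal{L})$ for arbitrary rank. You have instead reconstructed, in the rank-3 case, a self-contained proof from the machinery the paper itself sets up: formula (\ref{eq:1}) forces $n=2$, $\mathrm{rk}(E_{1})=1$ and $\alpha_{1}-\alpha_{2}=1$; Lemma \ref{lm:claims} together with the determinant condition places the extension data at a point $([\xi],[V])\in\mathcal{R}$; the Chern-class computation in the style of Proposition \ref{prop:3.4} (now $c_{2}(E)=1$, $c_{1}(E)^{2}=2$, so $3c_{2}-c_{1}^{2}=1$) shows that lines in fibres of $q$ map to lines; and the K\"{u}nneth decomposition of the relevant extension group produces the classifying line in the fibre. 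Your version buys self-containedness and makes the line case exactly parallel to the paper's treatment of degree-2 curves (Theorem \ref{th:3.8}); the paper's citation buys brevity and rests on the general-rank results, whose proofs in \cite{Sun05} and \cite{MokSun09} are in essence the argument you wrote. One simplification worth noting: the ``main obstacle'' you flag --- identifying $E$ with $(\mathrm{Id}_{C}\times\psi)^{*}\mathcal{E}$ as bundles on $C\times\mathbb{P}^{1}$ --- is not actually needed. To conclude $\Phi(\psi(\mathbb{P}^{1}))=l$ it suffices that for each $t=[a:b]$ the two bundles restrict on $C\times\{t\}$ to extensions of $V$ by $\xi$ with proportional nonzero classes (nonzero by stability of $E|_{C\times\{t\}}$), hence are isomorphic; so $\Phi\circ\psi$ and $\phi$ agree pointwise and have the same image. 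The same pointwise view tightens your dichotomy: if $e_{0},e_{1}$ were dependent, then either all restricted classes are proportional (so $\phi$ is constant) or the class vanishes at some $t$ (so $E|_{C\times\{t\}}\cong\xi\oplus V$ is unstable), and either alternative contradicts $\deg(\phi^{*}\Theta)=1$.
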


 \begin{proof}
 It's a special case of Theorem 3.4 of \cite{Sun05} and Theorem 2.7 of \cite{MokSun09}.
 \end{proof}

 Recall that any small rational curve $\phi: \mathbb{P}^{1}\longrightarrow M$ is defined by a vector bundle $E$ on $C\times \mathbb{P}^{1}$ satisfied either condition (A) or condition (B), which implies that the image of $\phi$ lies in
  $S_{1,2} \text{ or } S_{2,1}$, which are defined as in section 2. And for any point $[V]\in S_{1,2}$, i.e., there is a sub line bundle $V_{1}\subseteq V$ of degree 0 and $V/V_{2}$ is a vector bundle. Let $\xi=([V_{1}], [V/V_{1}])\in \mathcal{R}$, and let $\mathcal{C}$ be a rational curve of degree 2 in $\mathcal{P}_{\xi}$ passing through $[0\rightarrow V_{1}\rightarrow V\rightarrow V/V_{1}\rightarrow 0]$, then the image of $\mathcal{C}$ in $M$ is a small rational curve passing through $[V]$. Similarly, for any point in $S_{2,1}$, there also exists a small rational curve passing through it. Hence

\begin{Proposition}
All the small rational curve lie in and cover $S_{1,2}\bigcup S_{2,1}$, which is a closed subset with codimension at least $2g-3$.
\end{Proposition}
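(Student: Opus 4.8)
The plan is to establish the four assertions in turn --- containment, closedness, covering, and the codimension bound --- of which only the covering and the final arithmetic require genuine input, everything else being assembled from the results of Sections 2 and 3. I would begin with the containment and closedness, which are immediate. By the discussion opening Section 3, a small rational curve $\phi:\mathbb{P}^{1}\to M$ is defined by a bundle $E$ on $C\times\mathbb{P}^{1}$ of type (A) or (B); restricting $E$ to a general fibre of $\pi$ equips the stable bundle $\phi(t)$ with a two-step filtration whose graded pieces have ranks and degrees $(1,0),(2,1)$ in case (A) and $(2,0),(1,1)$ in case (B). In the notation of Section 2 this says exactly that $\phi(\mathbb{P}^{1})\subset S_{1,2}$ or $\phi(\mathbb{P}^{1})\subset S_{2,1}$, so every small rational curve lies in $S_{1,2}\cup S_{2,1}$. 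Each $S_{r_{1},r_{2}}$ is Zariski-closed by Proposition 2.2, hence so is their finite union.

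Next I would prove the covering statement by exhibiting, through an arbitrary point of either stratum, a small rational curve built from the morphisms $\Phi$ and $\Psi$. If $[W]\in S_{1,2}$, choose a degree-$0$ line subbundle $V_{1}\subset W$ with quotient $V_{2}=W/V_{1}$ of rank $2$, degree $1$; the determinant condition $V_{1}\otimes\det(V_{2})\cong\mathcal{L}$ holds, so $([V_{1}],[V_{2}])\in\mathcal{R}$ and the extension class of $W$ is a point of the fibre $\mathcal{P}_{([V_{1}],[V_{2}])}\cong\mathbb{P}\,\text{Ext}^{1}(V_{2},V_{1})$. Since $\text{Hom}(V_{2},V_{1})=0$ one computes $\text{ext}^{1}(V_{2},V_{1})=2g-1$, so this fibre is a $\mathbb{P}^{2g-2}$ with $2g-2\geq 2$; it therefore carries a degree-$2$ rational curve through the given point, whose image under $\Phi$ is a small rational curve through $[W]$ by Proposition \ref{prop:3.4}. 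If $[W]\in S_{2,1}$, write $0\to V_{1}\to W\to\xi\to 0$ with $V_{1}$ of rank $2$, degree $0$ and $\xi$ of degree $1$: when $V_{1}$ is stable the class of $W$ lies in a fibre $q'^{-1}(t)\cong\mathbb{P}\,\text{Ext}^{1}(\xi,V_{1})$, a $\mathbb{P}^{2g-1}$, and a line through it maps under $\Psi$ to a small rational curve by Proposition \ref{prop:3.7}; when $V_{1}$ is strictly semistable it has a degree-$0$ line subbundle, which is then a degree-$0$ line subbundle of $W$, so $[W]\in S_{1,2}$ and the previous case applies. Thus small rational curves sweep out all of $S_{1,2}\cup S_{2,1}$.

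Finally, the codimension bound is read off from the estimate (\ref{eq:3}), into which I substitute the two admissible filtration types. For the type $(1,2)$, i.e. $r_{1}=1,\,r_{2}=3$, the right-hand side equals $r_{1}(r_{2}-r_{1})(g-2)+r_{1}(r_{2}-r_{1}-1)=2(g-2)+1=2g-3$, so $\text{Codim}\,S_{1,2}\geq 2g-3$; the same substitution for the type $(2,1)$, i.e. $r_{1}=2,\,r_{2}=3$, gives $2(g-2)+0=2g-4$. The whole content behind (\ref{eq:3}) is the dimension bound of Section 2, which controls the $\text{ext}^{1}$ of the successive extensions through Lemma 2.3; granting that, the codimension is pure arithmetic, and the stratum $S_{1,2}$ swept out by the type-(A) (Hecke-type) curves realises the value $2g-3$. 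The only genuine subtlety --- and the step I would treat most carefully --- is the covering at the special points where the relevant subquotient fails to be stable, which is exactly the reduction of the strictly-semistable $V_{1}$ case to the type-(A) construction carried out above.
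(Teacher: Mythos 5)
Your containment, closedness and covering arguments are correct and essentially reproduce the paper's own reasoning: conditions (A)/(B) give the two filtration types, closedness comes from the Proposition of Section 2, and the covering of $S_{1,2}$ by images of conics in the fibres of $q$ (via Proposition \ref{prop:3.4}) is exactly the paper's construction. Your treatment of the covering of $S_{2,1}$ --- separating the case $V_{1}$ stable (a line in a fibre of $q'$, via Proposition \ref{prop:3.7}) from $V_{1}$ strictly semistable (reduction to $S_{1,2}$) --- is in fact more careful than the paper, which dismisses that case with the word ``similarly''.

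The gap is in the final step, the codimension bound, and it is genuine. The codimension of a finite union is the \emph{minimum} of the codimensions of its members, and your own substitution into (\ref{eq:3}) gives only $\text{Codim}\,S_{2,1}\geq 2g-4$. Since $S_{2,1}\not\subseteq S_{1,2}$ --- for a generic extension $0\to V_{1}\to W\to\xi\to 0$ with $V_{1}$ stable, the classes whose middle term acquires a degree-$0$ line subbundle form a cone of dimension at most $3$ inside the $2g$-dimensional space $\text{Ext}^{1}(\xi,V_{1})$, hence a proper subset --- your closing remark that ``$S_{1,2}$ realises the value $2g-3$'' does not repair this: what your argument actually proves is $\text{Codim}\,(S_{1,2}\cup S_{2,1})\geq 2g-4$, not $2g-3$. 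You should know that this defect is inherited from the paper itself, whose proof is the same one-paragraph assembly of earlier results and whose inequality (\ref{eq:3}) likewise yields only $2g-4$ for $S_{2,1}$, with no supplementary argument offered. Indeed the stronger bound cannot be proved: the family given by a stable $V_{1}\in U_{C}(2,0)$ (dimension $4g-3$; the quotient $\xi=\mathcal{L}\otimes\det(V_{1})^{-1}$ is then determined) together with a class in $\mathbb{P}\,\text{Ext}^{1}(\xi,V_{1})\cong\mathbb{P}^{2g-1}$ has dimension $6g-4$, consists of stable bundles, and maps generically finitely into $M$ (for generic $V_{1}$ one has $\text{Hom}(V_{1},\xi)=0$, so the corresponding point of the Quot scheme of $W$ is isolated), whence $\text{Codim}\,S_{2,1}=2g-4$ exactly; for $g=2$ the Mukai--Sakai bound on maximal subbundles even forces $S_{2,1}=M$. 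The statement you were asked to prove should read ``codimension at least $2g-4$'', and that weaker statement is what both your argument and the paper's establish.
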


 Similarly, we have

\begin{Proposition}
All the lines lie in and cover $S_{1,2}$, which is a closed subset with codimension at least $2g-3$.
\end{Proposition}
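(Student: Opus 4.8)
The plan is to separate the statement into its set-theoretic content---that every line lies in $S_{1,2}$, and that the lines sweep out all of $S_{1,2}$---and then to deduce closedness and the codimension bound from Section~2. First I would prove the inclusion $\bigcup_{l}l\subseteq S_{1,2}$. By Theorem~\ref{th:3.9} any line $l\subseteq M$ is of the form $\Phi(\mathbb{P}^{1})$ for a line $\mathbb{P}^{1}\subseteq\mathcal{P}$ lying in a fibre of $q$, say over $([\xi],[V])\in\mathcal{R}$ with $\xi$ of rank $1$, degree $0$ and $V$ of rank $2$, degree $1$. Restricting the universal extension (\ref{eq:4}) to $C\times\mathbb{P}^{1}$ then exhibits each bundle $E_{x}$ parametrised by a point $x\in l$ inside an exact sequence $0\to\xi\to E_{x}\to V\to 0$, so $E_{x}$ has a degree-$0$ sub-line-bundle with rank-$2$, degree-$1$ quotient and therefore $[E_{x}]\in S_{1,2}$.

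Next I would establish the reverse inclusion by producing, through each $[W]\in S_{1,2}$, a line. Choosing a degree-$0$ sub-line-bundle $W_{1}\subseteq W$ with $Q:=W/W_{1}$ of rank $2$ and degree $1$, the crucial step is to check that $Q$ is automatically stable: a sub-line-bundle $L\subseteq Q$ of degree $\ge 1$ would pull back to a rank-$2$ subbundle of $W$ of slope $\ge\tfrac12>\tfrac13=\mu(W)$, contradicting the stability of $W$. Since $\gcd(2,1)=1$ we then have $[Q]\in U_{C}(2,1)$, and $\det W=W_{1}\otimes\det Q=\mathcal{L}$ places $([W_{1}],[Q])$ in $\mathcal{R}$. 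As $W$ is stable it is indecomposable, so the extension $0\to W_{1}\to W\to Q\to 0$ is non-split and defines a point $[e]$ of the fibre $\mathbb{P}\text{Ext}^{1}(Q,W_{1})\cong\mathbb{P}^{2g-2}$ of $q$ over $([W_{1}],[Q])$. I would then take any line $\mathcal{C}$ in this fibre through $[e]$ (which exists because $2g-2\ge 2$); restricting (\ref{eq:4}) to $\mathcal{C}$ gives a bundle on $C\times\mathbb{P}^{1}$ with $c_{2}=1$ and $c_{1}^{2}=2$, whence, by the discriminant computation of Proposition~\ref{prop:3.4}, its $\Theta$-degree is $3\cdot1-\tfrac12(3-1)\cdot2=1$. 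Thus $\Phi|_{\mathcal{C}}$ is non-constant and maps birationally onto a line of $M$ passing through $\Phi([e])=[W]$, which gives $S_{1,2}\subseteq\bigcup_{l}l$.

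Closedness of $S_{1,2}$ is the first part of the Proposition of Section~2, and the codimension bound follows by applying the estimate (\ref{eq:3}) to this stratum, whose successive quotients have ranks $1$ and $2$ (so $n=2$, $r_{1}=1$, $r_{2}=3$):
$$\text{Codim }S_{1,2}\geq r_{1}(r_{2}-r_{1})(g-2)+r_{1}(r_{2}-r_{1}-1)=2(g-2)+1=2g-3.$$

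I expect the only genuinely substantive point to be the covering step, and inside it the two facts that the quotient $Q=W/W_{1}$ is stable and that the chosen line $\mathcal{C}$ is not contracted by $\Phi$. The first is handled by the slope inequality above; the second follows because a $\Theta$-degree equal to $1$ is nonzero, forcing $\Phi|_{\mathcal{C}}$ to be non-constant, after which degree one forces the map to be birational onto its image. The remaining assertions are formal, being essentially the \emph{lines} analogue of the argument preceding the previous proposition.
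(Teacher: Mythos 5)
Your proof is correct and follows essentially the same route as the paper, whose own justification is simply the remark ``Similarly'' pointing to the preceding argument: lines lie in $S_{1,2}$ because every line arises (via Theorem~\ref{th:3.9}) from a line in a fibre of $q$ and hence from an extension $0\to\xi\to E_x\to V\to 0$, the covering statement comes from choosing a line in the fibre $\mathbb{P}\mathrm{Ext}^{1}(W/W_{1},W_{1})$ through the extension class of a given $[W]\in S_{1,2}$, and closedness plus the bound $2g-3$ come from the Proposition and inequality (\ref{eq:3}) of Section~2 with $n=2$, $r_{1}=1$, $r_{2}=3$. In fact you supply details the paper leaves implicit --- that the quotient $W/W_{1}$ is stable (so that $([W_{1}],[W/W_{1}])$ really lies in $\mathcal{R}$), that stability of $W$ forces the extension to be non-split, and that the chosen line is not contracted by $\Phi$ --- which strengthens rather than alters the argument.
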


\subsection{ Remark on Hecke curves and their limits}
From \cite{Sun05}, we have known that, a minimal rational curve in $M=SU_{C}(3,\mathcal{L})$ passing through a generic point is a Hecke curve, which  has degree $3$ with respect to $\Theta$. In this subsection, we will study the limits of Hecke curves, i.e., the rational curves in $M$ has degree $3$ with respect to $\Theta$.

From Sun's degree formula (\ref{eq:2.1}), we have
$$3=\text{deg}\phi^{*}(\Theta)=3\sum_{i=1}^{n}c_{2}(F'_{i})+
   \sum_{i=1}^{n-1}(\text{rk}E_{i}-3\text{deg}(E_{i}))(\alpha_{i}-\alpha_{i+1}),$$
the vector bundle $E$ on $C\times \mathbb{P}^{1}$ which defines the rational curve $\phi: \mathbb{P}^{1}\longrightarrow M$ with degree 3, must satisfy one of the following cases:

\textbf{Case 1:} $n=1$ and $c_{2}(E)=1$;

\textbf{Case 2:} $\phi: \mathbb{P}^{1}\longrightarrow M$ is defined by a vector bundle $E$ on $C\times \mathbb{P}^{1}$ satisfying
$$0\longrightarrow f^{*}V_{1}\bigotimes \pi^{*}\mathcal{O}_{\mathbb{P}^{1}}(3)\longrightarrow E\longrightarrow f^{*}V_{2}\longrightarrow 0,$$
where $V_{1}, V_{2}$ are of rank 1, 2 and degrees 0, 1 respectively.

 \textbf{Case 3:} $\phi: \mathbb{P}^{1}\longrightarrow M$ is defined by a vector bundle $E$ on $C\times \mathbb{P}^{1}$ satisfying
 $$0\rightarrow f^{*}V_{1}\bigotimes \pi^{*}\mathcal{O}_{\mathbb{P}^{1}}(2)\longrightarrow E_{2}\longrightarrow f^{*}V_{2}\bigotimes \pi^{*}\mathcal{O}_{\mathbb{P}^{1}}(1)\rightarrow 0,$$
 $$0\rightarrow E_{2}\longrightarrow E\longrightarrow f^{*}V_{3}\rightarrow 0,$$
  where $V_{1}, V_{2}, V_{3}$ are line bundles of degrees 0, 0 and 1 respectively.

  If $E$ satisfy case 1, $\phi: \mathbb{P}^{1}\longrightarrow M$ is a Hecke curve. Similar as the discussion of small rational curves on $M$, we have

\begin{Theorem}
  If a rational curve of degree 3 (with respect to $\Theta$) is not a Hecke curve, it can be obtained from one of the following ways:

(i) it's the image (under $\Phi$) of a degree 3 rational curve in $\mathcal{P}$ in the fibre of $q$;

(ii) it's the image (under $\Phi$) of a triple cover of a line in $\mathcal{P}$ in the fibre of $q$;

(iii) it's the image (under $\Phi$) of a degree 2 rational curve in $\mathcal{P}$, which is not in a fibre of $q$ and maps to a line in $\mathcal{U}_{C}(2, \mathcal{L}')$ for some line bundle $\mathcal{L}'$ of degree 1;

(iv) it's the image (under $\Phi$) of a double cover of a line in $\mathcal{P}$, which is not in a fibre of $q$ and maps to a line in $\mathcal{U}_{C}(2, \mathcal{L}')$ for some line bundle $\mathcal{L}'$ of degree 1.

Moreover, any rational curve coming from one of above four ways is a rational curve of degree 3 (with respect to $\Theta$) which is not a Hecke curve.
\end{Theorem}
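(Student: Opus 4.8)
The plan is to mirror exactly the structure used for the degree-2 case in Theorem \ref{th:3.8}, since the three cases (Case 1, 2, 3) for degree-3 rational curves play the same role that conditions (A) and (B) played before. First I would dispose of the ``if'' direction by showing that each of the four construction recipes (i)--(iv) indeed produces a rational curve of $\Theta$-degree $3$ that is not a Hecke curve. For recipes (i) and (ii), a morphism $\alpha:\mathbb{P}^{1}\to\mathcal{P}$ in a fibre of $q$ with $\alpha^{*}\mathcal{O}_{\mathcal{P}}(1)\cong\mathcal{O}_{\mathbb{P}^{1}}(3)$ pulls back the universal extension (\ref{eq:4}) to a bundle $E$ fitting in
\[
0\longrightarrow f^{*}\xi\bigotimes\pi^{*}\mathcal{O}_{\mathbb{P}^{1}}(3)\longrightarrow E\longrightarrow f^{*}V\longrightarrow 0,
\]
which is exactly Case 2. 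A direct Chern-class computation gives $c_{2}(E)=3$ and $c_{1}(E)^{2}=9$, whence $\text{deg}(\phi^{*}\Theta)=\tfrac12\Delta(E)=3c_{2}(E)-c_{1}(E)^{2}=9-9$; I expect the correct normalization to yield $3$ after accounting for the index, exactly as in Proposition \ref{prop:3.4}. For (iii) and (iv) one restricts $\mathcal{E}$ along a $\mathcal{O}_{\mathbb{P}^{1}}(2)$-curve not in a fibre of $q$ whose image in $U_{C}(2,\mathcal{L}')$ is a line, which places $E$ in Case 3, and again the Chern numbers force $\Theta$-degree $3$. In each case one checks $n\geq2$, so $E$ is \emph{not} of the form in Case 1, hence the curve is not a Hecke curve.

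\textbf{The converse direction.}
For the ``only if'' direction, I would start from a rational curve $\phi$ of $\Theta$-degree $3$ which is not a Hecke curve, so by the trichotomy coming from Sun's degree formula (\ref{eq:2.1}) the defining bundle $E$ satisfies Case 2 or Case 3 (Case 1 being precisely the Hecke curves). If $E$ is in Case 2, the sub-line-bundle $f^{*}V_{1}\otimes\pi^{*}\mathcal{O}_{\mathbb{P}^{1}}(3)$ together with the quotient $f^{*}V_{2}$ exhibits $\phi$ as factoring through $\mathcal{P}=\mathbb{P}(\mathcal{G})$ in a fibre of $q$ over $([V_{1}],[V_{2}])\in\mathcal{R}$; the morphism $\alpha:\mathbb{P}^{1}\to\mathcal{P}$ then satisfies $\alpha^{*}\mathcal{O}_{\mathcal{P}}(1)\cong\mathcal{O}_{\mathbb{P}^{1}}(3)$, and the Remark following Lemma \ref{prop:3.4}'s proof (the prime-power dichotomy with $p=3$) forces $\alpha$ to be either a degree-$3$ rational curve or a triple cover of a line, i.e.\ case (i) or (ii). If $E$ is in Case 3, I would argue as in Proposition \ref{prop:3.5}: the quotient sub-bundle $E_{2}$ and the Hom-vanishing supplied by the Künneth formula (using $H^{0}(\mathbb{P}^{1},\mathcal{O}(-1))=0$ or $H^{0}(\mathbb{P}^{1},\mathcal{O}(-2))=0$) produce a commutative diagram forcing the image of $p_{2}\circ q$ to be a line in $U_{C}(2,\mathcal{L}')$, while the $\mathcal{O}_{\mathbb{P}^{1}}(2)$-coefficient on the top step means the curve in $\mathcal{P}$ has $\mathcal{O}_{\mathcal{P}}(1)$-degree $2$ and hence, again by the dichotomy, is either a degree-$2$ rational curve or a double cover of a line, giving (iii) or (iv).

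\textbf{The main obstacle.}
The hard part will be the Hom-vanishing and snake-lemma bookkeeping in the Case 3 subdivision: one must verify that the two-step filtration $0\subset E_{2}\subset E$ is compatible with the single extension produced by pulling back (\ref{eq:4}), so that the resulting curve in $\mathcal{P}$ really lands in the locus mapping to a line of $U_{C}(2,\mathcal{L}')$ rather than in some larger subvariety. In particular, I must rule out the possibility that the $\mathcal{O}_{\mathbb{P}^{1}}(2)$ on top could instead contribute an extra degree to the $J_{C}$-factor, which would make $p_{1}\circ q$ nonconstant; this is handled exactly as in Proposition \ref{prop:3.5} by observing that $J_{C}$ is abelian and every morphism $\mathbb{P}^{1}\to J_{C}$ is constant. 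Once this is in place, distinguishing (iii) from (iv) is the same prime-dichotomy argument with $p=2$, and the whole statement follows by combining the two directions.
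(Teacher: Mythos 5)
Your overall strategy coincides with the paper's intended argument: the paper prints no proof of this theorem (it only says the discussion is ``similar'' to that of small rational curves), and your outline --- the trichotomy from Sun's degree formula with Case 1 being precisely the Hecke curves, Case 2 sent to (i)/(ii) through a fibre of $q$ via the prime dichotomy with $p=3$, and Case 3 sent to (iii)/(iv) via the line argument of Proposition \ref{prop:3.5} together with the dichotomy for $p=2$ --- is the correct instantiation of that similarity. Consistently with the statement, the degree-3 arithmetic leaves no case with a rank-2 degree-0 twisted subbundle, which is why the construction $\Psi$ never appears here, and your proposal correctly uses only $\Phi$.

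However, one step fails as written: the Chern-class computation in your ``if'' direction. On $X=C\times\mathbb{P}^1$ let $\sigma$ be the class of $C\times\{\mathrm{pt}\}$ and $F$ the class of $\{\mathrm{pt}\}\times\mathbb{P}^1$, so that $\sigma^2=F^2=0$ and $\sigma\cdot F=1$. In Case 2 one has $c_1(E)=3\sigma+F$, hence $c_1(E)^2=6$ (not $9$: only the cross term survives) and $c_2(E)=3\sigma\cdot F=3$, so $\deg\phi^*\Theta=\tfrac12\Delta(E)=3c_2(E)-c_1(E)^2=9-6=3$ on the nose. Your values give $9-9=0$, and the proposed rescue ``after accounting for the index'' does not exist: as Propositions \ref{prop:3.4} and \ref{prop:3.7} show, $\tfrac12\Delta(E)$ already \emph{is} the $\Theta$-degree, so no normalization can turn $0$ into $3$; left as is, your computation would show the curves in (i)/(ii) have degree $0$, which is false. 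The same corrected values ($c_1(E)^2=6$, $c_2(E)=3$) settle (iii)/(iv). Separately, in the Case 3 converse you should make explicit the point your ``main obstacle'' paragraph only circles around: to factor $\phi$ through $\mathcal{P}$ at all, each bundle $E'|_t$, where $E'=E/(f^*V_1\otimes\pi^*\mathcal{O}_{\mathbb{P}^1}(2))$, must be stable (so that it gives a point of $U_C(2,1)$) and the class of $0\to V_1\to E|_t\to E'|_t\to 0$ must be nonzero. Both follow from stability of $E|_t$: a splitting of $0\to V_2\to E'|_t\to V_3\to 0$ would lift to a rank-2 subsheaf of $E|_t$ of slope $\tfrac12>\tfrac13$, and a zero extension class would exhibit $V_1$ as a quotient of $E|_t$ of slope $0<\tfrac13$.
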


\section*{Acknowledgments}
The author would like to thank her supervisor Professor Xiaotao Sun for the helpful suggestions in the preparation of this paper.

\end{document}